\newtheorem{theorem}{Theorem}
\newtheorem{lemma}{Lemma}
\newtheorem{corollary}{Corollary}
\newtheorem{definition}{Definition}
\newtheorem{remark}{Remark}
\newtheorem{example}{Example}
\newcommand{\ba}{\begin{array}}
\newcommand{\ea}{\end{array}}
\newcommand{\ei}{\end{itemize}}
\newcommand{\bc}{\begin{center}}
\newcommand{\ec}{\end{center}}
\newcommand{\bfr}{\begin{flushright}}
\newcommand{\efr}{\end{flushright}}
\newcommand{\be}{\begin{equation}}
\newcommand{\ee}{\end{equation}}
\newcommand{\bea}{\begin{eqnarray}}
\newcommand{\eea}{\end{eqnarray}}
\newcommand{\bee}{\begin{eqnarray*}}
\newcommand{\eee}{\end{eqnarray*}}
\newcommand{\nii}{\noindent}
\begin{document}

\title[Fine spectra of generalized difference operator $\Delta_{a,b}$ on $\ell_p \ (1<p<\infty)$]{On some study of the fine spectra of generalized difference operator $\Delta_{a,b}$ on $\ell_p \ (1<p<\infty)$}


\author{Riddhick Birbonshi}
\address{Riddhick Birbonshi, Department of mathematics, Indian Institute of Technology Kharagpur, India 721302}
\curraddr{}
\email{riddhick.math@gmail.com}
\thanks{}

\author{Arnab Patra}
\address{Arnab Patra, Department of mathematics, Indian Institute of Technology Kharagpur, India 721302}
\curraddr{}
\email{arnptr91@gmail.com}
\thanks{}

\author{P. D. Srivastava}
\address{P. D. Srivastava, Department of mathematics, Indian Institute of Technology Kharagpur, India 721302}
\curraddr{}
\email{pds@maths.iitkgp.ernet.in}
\thanks{}

\subjclass[2010]{Primary: 47A10; 47B37}

\keywords{Spectrum of an operator; Infinite matrices; Sequence spaces.}

\date{}

\dedicatory{}

%
%
%
%
%
%
%
%
%
\begin{abstract}
In this paper, we determine the spectrum, the point spectrum, the continuous spectrum and the residual spectrum of the generalized  difference operator $\Delta_{a,b}$ on the sequence space $\ell_p \ (1< p < \infty)$ where the real sequences $a=\{a_k\}$ and $b=\{b_k\}$ are not necessarily convergent. Hence our results generalize the work given by Akhmedov and El-Shabrawy [Math. Slovaca 65~(5) (2015) 1137--1152] for the sequence space $\ell_p (1< p <\infty)$.
\end{abstract}
%
%
%
\maketitle
\section{Introduction}
Several authors have studied the spectrum and fine spectrum of various bounded linear operators defined on various sequence spaces. Altay and Ba{\c{s}}ar \cite{Basar04} and Kayaduman and Furkan \cite{Kayaduman} have obtained the spctrum and fine spectrum of the difference operator $\Delta$ over the sequence spaces $c_0,$ $c$ and $l_1,$ $bv$ respectively. Also the fine spectrum of  $\Delta$ over the sequence space $\ell_p,  bv_p (1 \leq p < \infty)$ has been studied by Akhmedov and Ba{\c{s}}ar \cite{AAFB,AAFB1}.
The fine spectrum of the generalized difference operator $B(r, s)$ has been studied by Altay and Ba{\c{s}}ar \cite{brs_c0_c}, Furkan et al. \cite{brs_l1_bv} over the sequence spaces $c_0$ and $c$, $l_1$ and $bv$ respectively. While the fine spectrum of the operator $B(r, s)$ over the sequence spaces $l_p$ and $bv_p$ $(1<p< \infty)$ has been studied by Bilgi{\c{c}} and Furkan \cite{brs_lp_bvp}.
Furkan et al. \cite{brst_c0_c, FURK2010}, Bilgi{\c{c}} and Furkan \cite{brst_l1_bv} have further generalized these results for the operator $B(r, s, t).$ Later, Altun \cite{altun} has studied the fine spectrum of triangular Toeplitz operator over the sequence spaces $c_0$ and $c.$

\nii The fine spectrum of the operator $\Delta_v$ over the sequence spaces $c_0$ and $\ell_1$ have been studied by Srivastava and Kumar \cite{SK1, SK}. Some of their results have been revised by Akhmedov and El-Shabrawy \cite{AE2012}. The fine spectrum of the operator $\Delta_v$ over the sequence $c$ and $\ell_p \ (1<p<\infty)$ have been also studied by Akhmedov and El-Shabrawy \cite{AE2011a}. They have also modified the operator $\Delta_v$ and studied its fine spectrum on the space $c$ and $\ell_p \ (1<p<\infty)$. Later, Akhmedov and El-Shabrawy \cite{AE12} have modified the operator $\Delta_v$ and studied its fine spectrum on the spaces $c_0$ and $l_1$. Next the operator $\Delta_v$ is further generalized to the operator $\Delta_{a,b}$, where the sequences $\{a_k\}$ $\{b_k\}$ are two convergent sequences of real numbers. Several authors have studied the fine spectrum of the operator $\Delta_{a,b}$ over some sequence spaces by imposing more conditions on the sequences $\{a_k\}$ and $\{b_k\}$. The fine spectrum of the operator $\Delta_{a,b}$ over the sequence space $c_0$ has been studied by Akhmedov and El-Shabrawy \cite{AE2010}, while the fine spectrum of the operator $\Delta_{a,b}$ over the sequence spaces $c$ and $\ell_p (1<p<\infty)$ has been studied by Akhmedov and El-Shabrawy \cite{AE2011} and El-Shabrawy \cite{EL2012} respectively. The fine spectrum of the operator $\Delta_{u,v}$ on the sequence space $\ell_1$ has been also studied by Srivastava and Kumar \cite{SK2}. Recently El-Shabrawy \cite{EL14} and Akhmedov and El-Shabrawy \cite{AE15} have modified the operator $\Delta_{a,b}$ and studied its fine spectrum on the sequence spaces $c_0$ and $\ell_p (1\leq p <\infty)$ respectively. Recently Das \cite{das} has studied the spectrum and fine spectrum of a new difference operator $U \left(r_1, r_2; s_1, s_2 \right)$ over $c_0$. \\
\nii In this paper, we consider the operator $\Delta_{a,b}$ defined on the sequence space $\ell_p \ (1<p<\infty)$, where the real sequences $a=\{a_n\}$ and $b=\{b_n\}$ are not necessarily convergent. The matrix form of the operator $\Delta_{a,b}$ is given as follows :
\begin{equation} \label{main}
   \Delta_{a,b}=
  \begin{bmatrix}
   a_1   & 0      & 0      & 0      & \cdots \\
   b_1   & a_2    & 0      & 0      & \cdots \\
   0     & b_2    & a_3    & 0      & \cdots \\
   0     & 0      & b_3    & a_4    & \cdots \\
  \vdots & \vdots & \vdots & \vdots & \vdots
  \end{bmatrix},
\end{equation}
where the real sequences $a=\{a_k\}$ and $b=\{b_k\}$ are either periodic sequences of period $m \ (m\geq1)$ or $a_{km-i}\to p_{m-i}$ and $b_{km-i}\to q_{m-i}$ as $k\to\infty$, for $i=0,1,2...m-1$ with
$b_k\neq0$ for all $k\in\mathbb{N}$, $q_j\neq0$ for $j\in\{1,2,...m\}$. Thus we have generalized the results given by Akhmedov and El-Shabrawy \cite{AE15} for the sequence space $\ell_p (1< p <\infty)$.

\section{Preliminaries and notation}
Now we give some results related to the spectrum of the operator $\Delta_{a,b}$ on the space $\ell_p \ (1< p < \infty)$. Let $X$ and $Y$ be two Banach spaces and $A : X\rightarrow Y$ be a bounded linear operator. We denote the range of $A$ as $R(A)$, that is, $R(A)=\{y\in Y: y=Ax, x\in X\}$. The set of all bounded linear operators on $X$ into itself is denoted by $B(X)$. Further, the adjoint of $A$, denoted by $A^*$, is a bounded linear operator on the dual space $X^*$ of $X$ defined by
\[
(A^*\phi)(x)=\phi(Ax) \mathrm{~for~all~} \phi\in X^* \mathrm{~and~} x\in X.
\]

Let $X\neq\{0\}$ be a complex normed linear space and $A: D(A)\rightarrow X$ be a linear operator with domain $D(A)\subseteq X$. With $A$, we associate the operator $A_\alpha=(A-\alpha I)$, where $\alpha$ is a complex number and $I$ is the identity operator on $D(A)$. The inverse of $A_\alpha$ (if exists) is denoted by $A_\alpha^{-1}$, where $A_\alpha^{-1}=(A-\alpha I)^{-1}$ and is known as the resolvent operator of $A$. It is easy to verify that $A_\alpha^{-1}$ is linear, if $A_\alpha$ is linear. The definitions and known results as given below will be used in the sequel.

\begin{definition}
Let $X\neq\{0\}$ be a complex normed linear space and $A:D(A)\rightarrow X$ be a linear operator with domain $D(A)\subseteq X$. A regular value of $A$ is a complex number $\alpha$ such that the following conditions R1-R3 hold $:$\\
(R1) $A_\alpha^{-1}$ exists\\
(R2) $A_\alpha^{-1}$ is bounded\\
(R3) $A_\alpha^{-1}$ is defined on a set which is dense in $X$.
\end{definition}

Resolvent set $\rho(A,X)$ of $A$ is the set of all regular values $\alpha$ of $A$. Its complement in the complex plane $\mathbb{C},$ i.e., $\sigma(A,X)=\mathbb{C} \setminus\rho (A,X)$ is called  the spectrum of $A$. The spectrum $\sigma(A,X)$ is further partitioned into three disjoint sets, namely, point spectrum, continuous spectrum and residual spectrum which are defined as follows:

 The point spectrum $\sigma_p(A,X)$ is the set of all $\alpha\in \mathbb{C}$ such that $A_\alpha^{-1}$ does not exist, i.e., the condition (R1) fails. The elements of $\sigma_p(A,X)$ are called eigenvalues of $A$.

 The continuous spectrum $\sigma_c(A,X)$ is the set of all $\alpha\in \mathbb{C}$ such that the conditions (R1) and (R3) hold but the condition (R2) does not hold, i.e., $A_\alpha^{-1}$ exists, domain of $A_\alpha^{-1}$ is dense in $X$ but $A_\alpha^{-1}$ is unbounded.

 The residual spectrum  $\sigma_r(A,X)$ is the set of all $\alpha\in \mathbb{C}$ such that $A_\alpha^{-1}$ exists but does not satisfy the condition (R3), i.e., domain of $A_\alpha^{-1}$ is not dense in $X$. In this case, the condition (R2) may or may not hold good.

Let $A \in B(X)$ and $A_{\lambda} = (A-\lambda I)$ for a complex number $\lambda$. Then Goldberg \cite{Q} considered three possibilities for $R(A_{\lambda})$ and $A_{\lambda}^{-1}:$
\begin{enumerate}
\item[(A)] $R(A_{\lambda}) = X,$
\item[(B)] $R(A_{\lambda})\neq \overline{R(A_{\lambda})} = X,$
\item[(C)] $\overline{R(A_{\lambda})} \neq X$
\end{enumerate}
and
\begin{enumerate}
\item[(1)] $A_{\lambda}$ is injective and $A_{\lambda}^{-1}$ is continuous,
\item[(2)] $A_{\lambda}$ is injective and $A_{\lambda}^{-1}$ is discontinuous,
\item[(3)] $A_{\lambda}$ is not injective
\end{enumerate}
If we combine the possibilities A, B, C and 1, 2, 3 then nine different states are created. These are labelled by $A_1$, $A_2$, $A_3$, $B_1$, $B_2$, $B_3,$ $C_1$, $C_2$, $C_3.$ For example, If the operator $A$ is in state $C_2$ for example, then $\overline{R(A_{\lambda})} \neq X$, $A_{\lambda}$ is injective and $A_{\lambda}^{-1}$ is discontinuous.
\begin{remark}\label{R1.7.7}
\rm If $\alpha$ is a complex number such that $A_\alpha\in A_1$ or
$A_\alpha\in B_1$, then $\alpha$ belongs to the resolvent set
$\rho(A,X)$ of $A$ on $X$. The further classification gives rise to
the fine spectrum of $A$. It is clear that,
\begin{eqnarray*}
\sigma_p(A,X)&=& A_3\sigma(A,X)\cup B_3\sigma(A,X)\cup C_3\sigma(A,X),\\
\sigma_c(A,X)&=& B_2\sigma(A,X)\\
{\rm and}~\sigma_r(A,X)&=& C_1\sigma(A,X)\cup C_2\sigma(A,X).
\end{eqnarray*}
\end{remark}

\noindent Let $M=(a_{nk})$ be an infinite matrix of complex numbers and $\lambda$ and $\mu$ be two sequence spaces. Then, the matrix $M$ defines a matrix mapping from $\lambda$ into $\mu$ if for every sequence $x=(x_k) \in \lambda$ the sequence $Mx = \{(Mx)_n\}$ is in $\mu$ where
\[(Mx)_n = \sum\limits_k a_{nk}x_k, \ n \in \mathbb{N} \]
and it is denoted by $M: \lambda \rightarrow \mu.$ We denote the class of all matrices $M$ such that $M: \lambda \rightarrow \mu$ by $(\lambda, \mu).$

\begin{lemma} \cite[p. 59]{Q} \label{denserange}
The bounded linear operator $A : X \rightarrow Y$ has dense range if and only if $A^*$ is one to one
\end{lemma}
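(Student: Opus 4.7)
The plan is to prove both implications separately, using the Hahn--Banach theorem on the harder direction. The forward direction is purely computational: assume $R(A)$ is dense in $Y$ and suppose $\phi \in Y^*$ satisfies $A^*\phi = 0$. Unwinding the definition of the adjoint, this means $\phi(Ax) = (A^*\phi)(x) = 0$ for every $x \in X$, so $\phi$ vanishes on $R(A)$. Because $\phi$ is continuous and $R(A)$ is dense, $\phi$ must vanish on all of $Y$, i.e.\ $\phi = 0$. Hence $A^*$ is injective.

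For the reverse direction I would argue by contrapositive. Assume $R(A)$ is \emph{not} dense in $Y$, so $\overline{R(A)}$ is a proper closed subspace of $Y$. Choose $y_{0} \in Y \setminus \overline{R(A)}$. The main tool is Hahn--Banach: there exists $\phi \in Y^{*}$ with $\phi \equiv 0$ on $\overline{R(A)}$ and $\phi(y_{0}) \neq 0$, so in particular $\phi \neq 0$. For every $x \in X$ we then have $(A^{*}\phi)(x) = \phi(Ax) = 0$, so $A^{*}\phi = 0$. Thus $A^{*}$ has a non-trivial kernel, i.e.\ is not injective.

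The main (indeed only) subtlety is the appeal to Hahn--Banach to produce a continuous linear functional separating $y_{0}$ from $\overline{R(A)}$; this is where the hypothesis that $Y$ is a Banach space (or at least a normed space, for which Hahn--Banach applies) is essential. The forward direction uses no deep tool beyond continuity and the defining identity of $A^{*}$. Combining the two implications yields the stated equivalence.
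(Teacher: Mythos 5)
Your proof is correct and is the standard argument: the paper itself does not prove this lemma but cites it from Goldberg et al.\ \cite[p.~59]{Q}, and your two-step argument (continuity plus density for the forward direction, and the Hahn--Banach separation of a point from the proper closed subspace $\overline{R(A)}$ for the converse) is exactly the textbook proof of that cited result. No gaps.
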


\begin{lemma} \cite[p. 126]{wilansky} \label{boundedl1}
A matrix $A=(a_{nk})$ gives rise to a bounded linear operator $A \in B(l_1)$ from $l_1$ to itself if and only if the supremum of $l_1$ norms of the columns of A is bounded.
\end{lemma}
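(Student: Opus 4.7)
The plan is to prove the two implications separately, since this is a standard characterization in the flavor of the Schur-type coefficient tests for matrix operators.

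\emph{Sufficiency.} Suppose $M:=\sup_k \sum_n |a_{nk}|<\infty$. For an arbitrary $x=(x_k)\in\ell_1$ I would estimate
\[
\|Ax\|_{\ell_1}=\sum_n\Bigl|\sum_k a_{nk}x_k\Bigr|\leq\sum_n\sum_k |a_{nk}||x_k|
=\sum_k|x_k|\sum_n|a_{nk}|\leq M\,\|x\|_{\ell_1},
\]
where the interchange of the two nonnegative sums is justified by Tonelli's theorem (or the monotone convergence theorem for counting measure). This simultaneously shows $Ax\in\ell_1$ and $\|A\|\leq M$, so $A\in B(\ell_1)$.

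\emph{Necessity.} Conversely, assume $A\in B(\ell_1)$ with operator norm $\|A\|$. For each fixed $k\in\mathbb{N}$ I would test $A$ against the standard unit vector $e_k=(\delta_{jk})_{j\in\mathbb{N}}$, which satisfies $\|e_k\|_{\ell_1}=1$. A direct computation gives $(Ae_k)_n=a_{nk}$, so $Ae_k$ is the $k$-th column of $A$ and
\[
\sum_n |a_{nk}|=\|Ae_k\|_{\ell_1}\leq\|A\|\cdot\|e_k\|_{\ell_1}=\|A\|.
\]
Taking the supremum over $k$ yields $\sup_k\sum_n|a_{nk}|\leq\|A\|<\infty$, completing the proof. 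Combining both inequalities in fact gives the identity $\|A\|=\sup_k\sum_n|a_{nk}|$.

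There is really no hard step here; the only point requiring any care is the justification of swapping the two sums in the sufficiency direction, and the choice of the correct test vectors (the standard basis of $\ell_1$, rather than something more exotic) in the necessity direction. The argument parallels the familiar proof that the norm of a matrix acting on $\ell_\infty$ is the supremum of row-sums, except that one tests on columns instead of rows because the duality between $\ell_1$ and $\ell_\infty$ exchanges the two roles.
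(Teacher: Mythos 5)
Your proof is correct and complete: Tonelli's theorem justifies the interchange of sums in the sufficiency direction, and testing against the unit vectors $e_k$ gives the necessity, yielding in fact the exact norm formula $\|A\|=\sup_k\sum_n|a_{nk}|$. The paper does not prove this lemma at all---it is quoted from Wilansky \cite[p.~126]{wilansky} as a known result---so there is no argument to compare against; yours is the standard one.
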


\begin{lemma} \cite[p. 126]{wilansky} \label{boundedl}
A matrix $A=(a_{nk})$ gives rise to a bounded linear operator $A \in B(l_\infty)$ from $l_\infty$ to itself if and only if the supremum of $l_1$ norms of the rows of A is bounded.
\end{lemma}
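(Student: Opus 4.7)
The plan is to establish the equivalence in two directions, using the fact that the $n$-th coordinate of $Ax$ is the action of the $n$-th row of $A$ as a functional on $l_\infty$, whose dual norm is exactly the $l_1$-norm of the row. So effectively the operator norm of $A$ on $l_\infty$ is the supremum over $n$ of the $l_1$-norms of the rows, which is even a sharper statement than the one required.

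For the necessity direction ($\Rightarrow$), I would assume $A\in B(l_\infty)$. First observe that since $Ax$ must make sense for every bounded $x$, the series $\sum_k a_{nk}x_k$ must converge for each $n$ and every $x\in l_\infty$, which already forces $(a_{nk})_{k\ge1}\in l_1$ for every $n$ (otherwise one can cook up a bounded $x$ whose partial sums diverge). To bound the row sums uniformly, fix $n$ and a truncation level $N$, and test $A$ against the vector $x^{(N)}\in l_\infty$ defined by $x^{(N)}_k=\sgn(a_{nk})$ for $k\le N$ and $x^{(N)}_k=0$ otherwise. Then $\|x^{(N)}\|_\infty\le 1$ and $(Ax^{(N)})_n=\sum_{k=1}^{N}|a_{nk}|$, so this partial sum is at most $\|A\|$. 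Letting $N\to\infty$ and then taking the supremum over $n$ yields $\sup_n\sum_k|a_{nk}|\le\|A\|<\infty$.

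For the sufficiency direction ($\Leftarrow$), suppose $M:=\sup_n\sum_k|a_{nk}|<\infty$. For any $x\in l_\infty$ the series defining $(Ax)_n$ then converges absolutely, with the uniform estimate $|(Ax)_n|\le\|x\|_\infty\sum_k|a_{nk}|\le M\|x\|_\infty$. Hence $Ax\in l_\infty$ and $\|Ax\|_\infty\le M\|x\|_\infty$. Linearity of $x\mapsto Ax$ is immediate from the matrix action, so $A\in B(l_\infty)$ with $\|A\|\le M$.

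The only subtle point is the preliminary observation in the necessity direction, namely that the hypothesis "$A\in B(l_\infty)$" implicitly includes the well-definedness of each row as an absolutely summable sequence; once that is in hand, the sign-vector test produces the row-sum bound in one step, and the two inequalities together even give $\|A\|=\sup_n\sum_k|a_{nk}|$, which is what is used tacitly throughout the rest of the paper.
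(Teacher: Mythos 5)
Your proof is correct and is the standard argument; the paper itself offers no proof of this statement, citing it directly from Wilansky, so there is nothing to compare against beyond noting that your two-sided estimate $\|A\|=\sup_n\sum_k|a_{nk}|$ is exactly the textbook result. The only cosmetic point is that the paper's matrices have complex entries, so in the necessity step the test vector should use the conjugate phase $x^{(N)}_k=\overline{a_{nk}}/|a_{nk}|$ (and $0$ when $a_{nk}=0$) rather than a real signum; with that reading your truncation argument goes through verbatim.
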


\begin{lemma} \cite[p. 174, Theorem 9]{maddox} \label{boundedlp}
Let $1 < p < \infty$ and suppose $A \in (l_1, l_1) \cap (l_\infty, l_\infty)$. Then $A \in  (l_p, l_p).$
\end{lemma}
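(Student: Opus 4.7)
The plan is to recognize this as an instance of the Riesz--Thorin interpolation theorem applied to the matrix operator $A$. The two hypotheses $A\in(\ell_1,\ell_1)$ and $A\in(\ell_\infty,\ell_\infty)$ say exactly that $A$ acts as a bounded linear operator on the endpoint spaces of the $\ell_p$ scale, with some operator norms $M_1$ and $M_\infty$ respectively (these are finite by Lemma \ref{boundedl1} and Lemma \ref{boundedl}). Since the $\ell_p$ spaces form a complex interpolation scale with $\ell_1$ and $\ell_\infty$ as endpoints, and since $\frac{1}{p} = \frac{1-\theta}{1} + \frac{\theta}{\infty}$ for $\theta = 1-1/p\in(0,1)$, the interpolation theorem will then yield that $A:\ell_p\to\ell_p$ is bounded with
\[
\|A\|_{\ell_p\to\ell_p} \;\leq\; M_1^{1/p}\, M_\infty^{1-1/p}.
\]

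The order of steps I would take is as follows. First I would fix a finitely supported sequence $x$, so that the matrix action $(Ax)_n=\sum_k a_{nk}x_k$ is literally a finite sum and the formal algebra is unproblematic. Next I would verify that the bilinear form $(x,y)\mapsto \sum_n y_n (Ax)_n$, defined for finitely supported $x,y$, admits the required endpoint bounds: when $(x,y)\in \ell_1\times\ell_\infty$ we get a bound by $M_1\|x\|_1\|y\|_\infty$, and when $(x,y)\in\ell_\infty\times\ell_1$ we get a bound by $M_\infty\|x\|_\infty\|y\|_1$ (using that $\ell_\infty$ and $\ell_1$ are dual under the natural pairing). Applying Riesz--Thorin (or equivalently Hadamard's three-lines lemma on the analytic family $z\mapsto \sum_n y_n(Ax)_n$ with suitable replacements of $x,y$ by complex-parameter functions) yields the claimed $\ell_p$-bound on finitely supported sequences. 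Finally I would extend the operator to all of $\ell_p$ by density.

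As a cleaner alternative that avoids complex-variable methods entirely, one can run Schur's test directly: write $|(Ax)_n|\leq \sum_k |a_{nk}|^{1/p'}\cdot |a_{nk}|^{1/p}|x_k|$ and apply H\"older's inequality with exponents $p',p$ to the inner sum. Raising to the $p$-th power, summing over $n$, and interchanging sums via Tonelli gives
\[
\sum_n |(Ax)_n|^p \;\leq\; \Bigl(\sup_n \sum_k |a_{nk}|\Bigr)^{p/p'} \Bigl(\sup_k \sum_n |a_{nk}|\Bigr) \sum_k |x_k|^p,
\]
which is exactly the desired conclusion with $\|A\|_{\ell_p\to\ell_p}\leq M_\infty^{1/p'} M_1^{1/p}$.

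I do not expect any genuine obstacle here; the only delicate point is to ensure that the formal sum defining $(Ax)_n$ actually converges for $x\in\ell_p$, which is handled either by first working on finitely supported sequences and extending by density, or by noting that the Schur-type estimate above controls absolute convergence a priori. The essential content is that $\ell_p$ sits in a real/complex interpolation scale between $\ell_1$ and $\ell_\infty$, so boundedness at both endpoints automatically propagates to every intermediate $p$.
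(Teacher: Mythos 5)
The paper does not prove this lemma at all; it is quoted verbatim from Maddox \cite[p.~174, Theorem~9]{maddox}, so there is no in-paper argument to compare against. Your proposal is correct, and your second, elementary route is essentially the standard textbook proof of this result: writing $|a_{nk}|=|a_{nk}|^{1/p'}\cdot|a_{nk}|^{1/p}$, applying H\"older with exponents $p'$ and $p$, raising to the $p$-th power and interchanging the order of summation gives
\[
\sum_n |(Ax)_n|^p \le \Bigl(\sup_n \sum_k |a_{nk}|\Bigr)^{p/p'}\Bigl(\sup_k \sum_n |a_{nk}|\Bigr)\,\|x\|_p^p,
\]
where the two suprema are finite precisely by the characterizations in Lemma~\ref{boundedl} and Lemma~\ref{boundedl1}. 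This also settles the absolute convergence of $(Ax)_n$ for every $x\in\ell_p$, since $\sum_k|a_{nk}|\,|x_k|^p\le\bigl(\sup_k|x_k|^p\bigr)\sum_k|a_{nk}|<\infty$, so no density argument is needed. The Riesz--Thorin route is equally valid (the matrix has complex entries, so the complex interpolation theorem applies directly) and yields the same bound $M_1^{1/p}M_\infty^{1-1/p}$; it buys generality (it would handle $(\ell_{p_0},\ell_{q_0})$ and $(\ell_{p_1},\ell_{q_1})$ endpoints) at the cost of invoking a three-lines argument, whereas the Schur-test computation is self-contained and is all that is required here.
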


Before proceeding, we list some notations which will be used throughout this paper.\\

\noindent \textbf{Notations :}
\begin{eqnarray*}
S_1 &=&  \Big\{ \lambda \in \mathbb{C}: \ |\lambda - p_1| \ |\lambda - p_2| \ \cdots | \ \lambda - p_m| \leq |q_1 q_2 \cdots q_m| \Big\},\\
S_2 &=& \Big\{ a_k : \ a_k \notin S_1 \ \mbox{ for } \ k \in \mathbb{N}\Big\},\\
S_3 &=& \Bigg\{ a_k :|a_k - p_1| \ \cdots \ |a_k - p_m| = |q_1 \cdots q_m| \ \mbox{ and} \\ && \sum\limits_{i=s}^{\infty} \Big|\frac{b_s b_{s+1} \cdots b_i}{(a_{s+1} - a_k)\cdots (a_{i+1} - a_k)} \Big|^p < \infty \ \mbox{ for some } \ (k\leq) \ s \in \mathbb{N} \Bigg\},\\
S_4 &=&  \Big\{\lambda \in \mathbb{C}: \ |\lambda - p_1| \ |\lambda - p_2| \ \cdots \ |\lambda - p_m| < |q_1 q_2 \cdots q_m| \Big\},\\
S_5 &=& \Big\{\lambda \in \mathbb{C}: \ |\lambda - p_1| \ |\lambda - p_2| \ \cdots \ |\lambda - p_m| = |q_1 q_2 \cdots q_m| \Big\},\\
S_6 &=& \Bigg\{ \lambda \in \mathbb{C} : \ |\lambda - p_1| \cdots |\lambda - p_m| = |q_1 \cdots q_m| \ \mbox{ and} \\ && \sum\limits_{k=2} ^{\infty} \Big|\frac{(\lambda - a_1)(\lambda - a_2) \cdots (\lambda - a_{k-1})}{b_1 b_2 \cdots b_{k-1}} \Big|^q <\infty \Bigg\}.
\end{eqnarray*}
where $q$ is such that $\frac{1}{p}+\frac{1}{q}=1$.

\section{Spectrum and fine spectrum of the operator $\Delta_{a,b}$ on the sequence space $\ell_p$}
In this section, we determine the spectrum and fine spectrum of the generalized difference operator $\Delta_{a,b}$ over the sequence space $\ell_p$, where $1<p<\infty$. Throughout this paper, we assume that the real sequences $a=\{a_k\}$ and $b=\{b_k\}$ \begin{enumerate}[(i)]
\item either periodic sequences of period $m \ (m\geq1)$  i.e. either $a=\{a_k\}$ is of the form $\{a_1,a_2,\ldots,a_m, \ a_1,a_2,\ldots,a_m,\ldots\}$ \\
and $b=\{b_k\}$ is of the form $\{b_1,b_2,\ldots,b_m, \ b_1,b_2,\ldots,b_m,\ldots\}$ with $b_k\neq 0$ for all $k\in \{1,2,...,m\}$ or
\item $a_{km-i}\to p_{m-i}$ and $b_{km-i}\to q_{m-i}$ as $k\to\infty$, for $i=0,1,2...m-1$. Here $b_k\neq0$ for all $k\in\mathbb{N}$, $q_j\neq0$ for $j\in\{1,2,...m\}$.
\end{enumerate}
{\bf Note :} When $\{a_k\}$ and $\{b_k\}$ are periodic with period $m \ (\geq 1)$, in that case $p_i=a_i$ and $q_i=b_i$ for all $i\in\{1,2,\cdots,m\}$.\\

\begin{theorem}
$\Delta_{a,b}:\ell_p\to \ell_p$ is a bounded linear operator.
\end{theorem}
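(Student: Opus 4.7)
The plan is to verify boundedness by first showing that the sequences $\{a_k\}$ and $\{b_k\}$ are bounded, and then invoking Lemma \ref{boundedlp} together with Lemmas \ref{boundedl1} and \ref{boundedl} to conclude that $\Delta_{a,b}$ is a bounded operator on $\ell_p$. Linearity is immediate from the matrix form, so the whole content is the boundedness.

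First I would observe that in both cases (i) and (ii) listed before the theorem, the sequences $\{a_k\}$ and $\{b_k\}$ are bounded. In case (i) they are periodic, hence take only finitely many values. In case (ii), each of the $m$ subsequences $\{a_{km-i}\}_k$ and $\{b_{km-i}\}_k$ is convergent and therefore bounded, and since $\{a_k\}$ (resp.\ $\{b_k\}$) is the finite union of these subsequences, it too is bounded. Set $M=\sup_k(|a_k|+|b_k|)<\infty$.

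Next I would apply Lemma \ref{boundedl1}: the $k$-th column of $\Delta_{a,b}$ has nonzero entries only $a_k$ (in row $k$) and $b_k$ (in row $k+1$), so its $\ell_1$-norm equals $|a_k|+|b_k|\leq M$. Hence $\Delta_{a,b}\in (\ell_1,\ell_1)$. Analogously, by Lemma \ref{boundedl}, the $k$-th row has nonzero entries only $a_k$ and (for $k\geq 2$) $b_{k-1}$, so its $\ell_1$-norm is at most $M$ as well, giving $\Delta_{a,b}\in (\ell_\infty,\ell_\infty)$.

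Finally, invoking Lemma \ref{boundedlp} with these two memberships yields $\Delta_{a,b}\in(\ell_p,\ell_p)$ for all $1<p<\infty$, which together with the trivial linearity of the matrix action finishes the proof. There is essentially no obstacle here; the only step worth spelling out is that the subsequential convergence condition in case (ii) gives overall boundedness of $\{a_k\}$ and $\{b_k\}$.
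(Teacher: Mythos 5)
Your proof is correct, but it takes a different route from the paper. The paper proves boundedness directly: it applies Minkowski's inequality to $\|\Delta_{a,b}x\|_{\ell_p}=\bigl(\sum_k|a_kx_k+b_{k-1}x_{k-1}|^p\bigr)^{1/p}$, splits the sum into the diagonal and subdiagonal parts, and obtains the explicit bound $\|\Delta_{a,b}x\|_{\ell_p}\le \sup_k\bigl(|a_k|+|b_k|\bigr)\,\|x\|_{\ell_p}$. You instead verify the column-sum and row-sum criteria of Lemmas \ref{boundedl1} and \ref{boundedl} and then invoke the Maddox interpolation result (Lemma \ref{boundedlp}); this is exactly the machinery the paper reserves for the much less transparent matrix $(\Delta_{a,b}-\lambda I)^{-1}$ in Theorem \ref{sepct_p}, so your argument is consistent with the paper's toolkit but is more indirect than necessary for a two-band matrix, and it does not produce the explicit operator-norm estimate that the Minkowski argument gives for free. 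On the other hand, you make explicit a point the paper silently assumes, namely that $\sup_k|a_k|$ and $\sup_k|b_k|$ are finite in both case (i) and case (ii) because each sequence is a finite union of convergent (or periodic) subsequences; that justification is worth having. Both proofs are sound.
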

\begin{proof}
The linearity of $\Delta_{a,b}$ is trivial. To show the boundedness, let $\{x_k\}\in \ell_p$. Then by Minkowski's inequality, we have
\bee
\|\Delta_{a,b} (x)\|_{\ell_p} &=& \Big(\sum_{k=1} ^{\infty} |a_kx_k + b_{k-1}x_{k-1}|^p \Big)^{1/p}\\
&\leq& \Big( \sum_{k=1} ^{\infty} |a_kx_k|^p\Big)^{1/p}+ \Big( \sum_{k=2} ^{\infty} |b_{k-1}x_{k-1}|^p\Big)^{1/p}\\
&\leq& \sup\limits_k |a_k| \Big( \sum_{k=1} ^{\infty} |x_k|^p\Big)^{1/p} + \sup\limits_k |b_k| \Big( \sum_{k=1} ^{\infty} |x_k|^p\Big)^{1/p}\\
&=& \sup\limits_k \big(|a_k|+|b_k|\big) \ \|x\|_{\ell_p} \ < \infty
\eee
Therefore $\Delta_{a,b}:\ell_p\to \ell_p$ is a bounded linear operator.
\end{proof}

\noindent Before going to the results on the spectrum of $\Delta_{a,b}$ we first prove a lemma which is useful.
\begin{lemma}\label{lm3.1}
Let $\lambda\in\mathbb{C}$ be such that
\begin{enumerate}[(I)]
\item $\lambda\neq a_k \ $ for all $k\in\mathbb{N}$ and
\item $|\lambda - p_1| \ |\lambda - p_2| \ \cdots \ |\lambda - p_m| > |q_1 q_2 \cdots q_m|$, where $p_i,q_i\in\mathbb{R}$ and $q_i\neq 0$ for $i=\{1,2,\ldots, m\}$
\end{enumerate}
Then there exist $\alpha>0$ and $N_0 \in \mathbb{N}$ such that for $k \geq N_0$, the followings hold:
\begin{enumerate}
\item[(i)]{\fontsize{10}{10}\begin{eqnarray*}
\frac{1}{|a_k - \lambda |}\leq \alpha, \ \ \left|\frac{b_k}{(a_k - \lambda)(a_{k+1} - \lambda)} \right| \leq \alpha,  \ \cdots \  , \left|  \frac{b_k b_{k+1}\cdots b_{k+m-2}}{(a_{k} - \lambda)(a_{k+1} - \lambda)\cdots (a_{k+m-1} - \lambda)} \right|\leq \alpha,
\end{eqnarray*}}
\item[(ii)]{\fontsize{10}{10}\begin{eqnarray*}
\frac{1}{|a_{k+m-1} - \lambda |} \leq \alpha, \ \ \left|\frac{b_{k+m-2}}{(a_{k+m-1} - \lambda)(a_{k+m-2} - \lambda)} \right| \leq \alpha,  \ \cdots  \ , \left|  \frac{b_{k+m-2} \cdots b_{k+1}b_k}{(a_{k+m-1} - \lambda)(a_{k+m-2} - \lambda)\cdots (a_k-\lambda)} \right| \leq \alpha.
\end{eqnarray*}}
\end{enumerate}
\end{lemma}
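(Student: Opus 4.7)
The plan is to exploit the assumed cyclic convergence of $\{a_k\}$ and $\{b_k\}$ modulo $m$ to show that each of the $2m$ finite products listed in (i) and (ii) is bounded uniformly in $k$ once $k$ is large enough. Condition (I) guarantees that each individual ratio is well defined, while condition (II) supplies the crucial lower bound on the denominators in the limit.

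First I would derive from (II) the positivity $|p_j - \lambda| > 0$ for every $j\in\{1,\ldots,m\}$, since otherwise $\prod_{j=1}^{m}|p_j - \lambda| = 0$, which would contradict $\prod_{j=1}^{m}|p_j-\lambda| > |q_1\cdots q_m| > 0$. Setting $\delta := \tfrac{1}{2}\min_{1\le j\le m}|p_j - \lambda| > 0$, I would use the convergence $a_{km - i}\to p_{m-i}$ (for each $i=0,1,\ldots,m-1$) together with the finiteness of the $m$ residue classes to produce a single $N_0\in\mathbb{N}$ such that $|a_k - \lambda|\ge \delta$ for every $k\ge N_0$. In parallel, the cyclic convergence $b_{km-i}\to q_{m-i}$ implies that the full sequence $\{b_k\}$ is bounded; write $M:=\sup_k|b_k|<\infty$.

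For $k\ge N_0$, each of the ratios in (i) has the form $\prod_{i=0}^{j-1}|b_{k+i}|\bigl/\prod_{i=0}^{j}|a_{k+i}-\lambda|$ for some $j\in\{0,1,\ldots,m-1\}$, and is therefore bounded above by $M^{j}\delta^{-(j+1)}$. The ratios in (ii) have the same structure with the indexing run backwards from $k+m-1$ down to $k$, and so the identical bound applies. Taking $\alpha := \max_{0\le j\le m-1} M^{j}\delta^{-(j+1)}$ then produces the desired constant simultaneously for every assertion in (i) and (ii).

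The main point, such as it is, lies in coordinating the $m$ residue-class convergences to obtain a single threshold $N_0$ that works uniformly for all denominator factors $|a_{k+i}-\lambda|$ with $0\le i\le m-1$ and all starting indices $k$; since there are only finitely many residue classes, this step is purely a matter of taking the maximum of the individual thresholds. Beyond this bookkeeping there is no genuine obstacle.
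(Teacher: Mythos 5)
Your proof is correct and follows essentially the same route as the paper: use the cyclic convergence of $\{a_k\}$ together with the triangle inequality $|a_k-\lambda|\ge|p_j-\lambda|-|a_k-p_j|$ to get a uniform positive lower bound on the denominators for $k\ge N_0$, combine with boundedness of $\{b_k\}$, and take $\alpha$ as the maximum of the finitely many resulting bounds. The only (harmless) difference is that you take $\delta=\tfrac12\min_j|p_j-\lambda|$ directly, whereas the paper obtains its lower bound $r_1$ via the distance from the $p_j$ to the boundary of the set $S_1$; your choice is in fact the more elementary one.
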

\begin{proof}
Let
$$f(\lambda) = \frac{(\lambda - p_1)(\lambda - p_2) \cdots (\lambda - p_m)}{q_1 q_2 \cdots q_m}$$ and $\mathbb{D} = \{z \in \mathbb{C}: |z|<1\}$. Clearly $f: \mathbb{C} \rightarrow \mathbb{C}$ is a continuous function so, $S_1 ^0 = f^{-1}(\mathbb{D})$ is an open set with $p_1, p_2, \ \cdots \ , p_m$ are its some interior points. So there exists $m$ open balls contained in $S_1 ^0$ with center $p_1, p_2, \ \cdots \ ,p_m$ and suitable radius $\varepsilon$. Clearly $\varepsilon <\min\limits_{i} \mbox{dist}(p_i, \partial S_1)$. Now if $\lambda$ satisfies the condition (II), then for $\varepsilon>0$ there exists $N_0 \in \mathbb{N}$ and a number $i \in \{1,2, \cdots, m \}$ such that for $k\geq N_0$, we have
\begin{eqnarray*}
|a_k - \lambda| &=& |a_k - p_i + p_i - \lambda|\\
 &\geq& |p_i - \lambda| - |a_k - p_i| \\
 &\geq& \mbox{dist}(p_i, \partial S_1) - \varepsilon \ \ \mbox{ as $\lambda\notin S_1$}\\
\mbox{Similarly, } \ |a_{k+j} - \lambda| &\geq & \mbox{dist}(p_{i+j}, \partial S_1) - \varepsilon, \ \ \mbox{for} \ \ j = 1, 2, \cdots , m-1.\\
\mbox{where }
p_{i+j} &=& \begin{cases}
p_{i+j},& \mbox{ if } {i+j}\leq m\\
p_{(i+j)-m},& \mbox{ if } {i+j}>m
\end{cases}
\end{eqnarray*}
Now if we choose $r_{0} = \min\limits_{i} \mbox{dist}(p_i, \partial S_1)$ and $r_{0} - \varepsilon = r_{1}$ then $r_1>0$. So, for $k\geq N_0$, we have $|a_{k+j} - \lambda| \geq r_{1}$ for $j = 0, 1, \cdots , m-1$. Since the sequence $\{b_k\}$ is bounded, so $|b_k| \leq t$ for all $k\in\mathbb{N}$. Hence there exist a real number $\alpha>0$ and $N_0 \in \mathbb{N}$ such that for $k\geq N_0$,
{\fontsize{10}{10}\begin{eqnarray*}
\frac{1}{|a_k - \lambda |}\leq \alpha, \ \ \left|\frac{b_k}{(a_k - \lambda)(a_{k+1} - \lambda)} \right| \leq \alpha, \ \ \cdots \ \ , \left|  \frac{b_k b_{k+1}\cdots b_{k+m-2}}{(a_{k} - \lambda)(a_{k+1} - \lambda)\cdots (a_{k+m-1} - \lambda)} \right|\leq \alpha,
\end{eqnarray*}}
where $\alpha=\max\big\{\frac{1}{r_1},\frac{t}{r_1 ^2},\ldots, \frac{t^{m-1}}{r_1 ^m}\big\}$.\\
Also (ii) can be shown in a similar way. This completes the proof.
\end{proof}

\begin{theorem}\label{sepct_p}
The spectrum of $\Delta_{a,b}$ on $\ell_p$ is given by
\begin{equation*}
\sigma(\Delta_{a,b}, l_p) = S_1 \cup S_2.
\end{equation*}
\end{theorem}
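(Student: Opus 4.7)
The plan is to prove both inclusions $\sigma(\Delta_{a,b}, \ell_p) \subseteq S_1 \cup S_2$ and $S_1 \cup S_2 \subseteq \sigma(\Delta_{a,b}, \ell_p)$.

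For $\sigma(\Delta_{a,b}, \ell_p) \subseteq S_1 \cup S_2$, I would fix $\lambda \notin S_1 \cup S_2$ and exhibit a bounded inverse for $\Delta_{a,b} - \lambda I$. A preliminary observation is that $\lambda \notin S_1 \cup S_2$ forces $\lambda \neq a_k$ for every $k \in \mathbb{N}$: if $\lambda = a_k$ then either $a_k \in S_1$ (so $\lambda \in S_1$) or $a_k \notin S_1$ (so $\lambda \in S_2$), both contradicting the choice of $\lambda$. Since $\Delta_{a,b} - \lambda I$ is then lower triangular with non-zero diagonal, its algebraic inverse is the lower triangular matrix with entries $c_{kk} = 1/(a_k - \lambda)$ and $c_{jk} = (-1)^{j-k} b_k b_{k+1} \cdots b_{j-1}/\prod_{i=k}^{j}(a_i - \lambda)$ for $j > k$. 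To obtain boundedness on $\ell_p$ I would invoke Lemma \ref{boundedlp} and verify the hypotheses of Lemmas \ref{boundedl1} and \ref{boundedl}, namely that the column and row $\ell_1$-sums of the inverse matrix are uniformly bounded. The key estimate is that Lemma \ref{lm3.1} together with $\lambda \notin S_1$ makes consecutive blocks of length $m$ in the products defining $c_{jk}$ contribute a factor arbitrarily close to $|q_1 \cdots q_m|/(|\lambda - p_1| \cdots |\lambda - p_m|) < 1$, producing geometric decay and hence summability along both rows and columns.

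For $S_1 \cup S_2 \subseteq \sigma(\Delta_{a,b}, \ell_p)$ I would first show $S_2 \subseteq \sigma_p(\Delta_{a,b}, \ell_p)$. Let $\lambda = a_k \in S_2$; by Lemma \ref{lm3.1} the quantity $|a_j - \lambda|$ is bounded away from zero for all sufficiently large $j$, so $\lambda = a_j$ can hold for only finitely many indices $j$. Taking $k_0$ to be the largest such index and defining $x_j = 0$ for $j < k_0$, $x_{k_0} = 1$, and $x_j = -b_{j-1} x_{j-1}/(a_j - \lambda)$ for $j > k_0$ produces a nonzero vector satisfying $(\Delta_{a,b} - \lambda I)x = 0$, with $x \in \ell_p$ by the same $m$-block geometric decay argument as above. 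To obtain $S_1 \subseteq \sigma(\Delta_{a,b}, \ell_p)$ I would next pick $\lambda$ in the open interior $S_4$ and construct a nonzero $y \in \ell_q$ satisfying $(\Delta_{a,b}^* - \lambda I) y = 0$; since $\Delta_{a,b}^*$ is the transpose, this reduces to the two-term recursion $y_{k+1} = -(a_k - \lambda) b_k^{-1} y_k$, giving $y_k = (-1)^{k-1} \frac{(a_1 - \lambda) \cdots (a_{k-1} - \lambda)}{b_1 \cdots b_{k-1}} y_1$, whose $m$-block ratio now tends to $|\lambda - p_1| \cdots |\lambda - p_m|/|q_1 \cdots q_m| < 1$, so $y \in \ell_q$. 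Lemma \ref{denserange} then forces $R(\Delta_{a,b} - \lambda I)$ to fail to be dense in $\ell_p$, hence $\lambda \in \sigma(\Delta_{a,b}, \ell_p)$. Closedness of the spectrum together with $\overline{S_4} = S_1$ promotes this to $S_1 \subseteq \sigma(\Delta_{a,b}, \ell_p)$.

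The principal technical hurdle is the uniform $\ell_1$-bound on the rows and columns of the inverse matrix used in the first inclusion. While Lemma \ref{lm3.1} delivers the geometric decay for the tail of each sum, the decay estimates depend on the residue class of the starting index modulo $m$, so the bounds must be carried out case by case, and the finitely many initial indices where Lemma \ref{lm3.1} does not yet take effect have to be absorbed into a single constant by the standing assumption $\lambda \neq a_k$. Once this bookkeeping is in place, the rest of the argument consists of the two straightforward recursion-based eigenvector constructions and an appeal to the closedness of the spectrum.
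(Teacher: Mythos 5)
Your proof of the inclusion $\sigma(\Delta_{a,b},\ell_p)\subseteq S_1\cup S_2$ is, in outline, exactly the paper's: the observation that $\lambda\notin S_1\cup S_2$ forces $\lambda\neq a_k$, the explicit triangular inverse, uniform $\ell_1$-bounds on its columns and rows obtained by grouping the products into blocks of length $m$ (geometric decay from $|q_1\cdots q_m|/\prod_i|\lambda-p_i|<1$ together with Lemma \ref{lm3.1} for the leading factors), and finally Lemma \ref{boundedlp}. Where you genuinely diverge is the reverse inclusion. The paper argues by contraposition: if $\lambda\notin\sigma(\Delta_{a,b},\ell_p)$ then $\lambda\neq a_k$ for all $k$ and $(\Delta_{a,b}-\lambda I)^{-1}e_1$ must lie in $\ell_p$, which fails for $\lambda\in S_4$ by the same block grouping; compactness of the spectrum then upgrades $S_4$ to $S_1$. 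You instead exhibit explicit eigenvectors: for $a_k\in S_2$ an eigenvector of $\Delta_{a,b}$ itself supported beyond the last index $j$ with $a_j=a_k$, and for $\lambda\in S_4$ an eigenvector of the adjoint in $\ell_q$, after which Lemma \ref{denserange} shows the range is not dense; you then close up $S_4$ to $S_1$ exactly as the paper does (both arguments tacitly use that $\overline{S_4}=S_1$, which holds because $\lambda\mapsto\prod_i(\lambda-p_i)$ is a nonconstant polynomial, hence an open map). Your route costs a little more bookkeeping but buys more: it already establishes $S_2\subseteq\sigma_p(\Delta_{a,b},\ell_p)$ and $S_4\subseteq\sigma_p(\Delta_{a,b}^*,\ell_p^*)$, facts the paper only proves later in Theorems \ref{point_p} and \ref{point_p*}. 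Two minor points to tidy up: when you cite Lemma \ref{lm3.1} for $\lambda=a_k\in S_2$, its hypothesis (I) fails, but the only consequence you use (that $|a_j-\lambda|$ is eventually bounded below) follows from hypothesis (II) alone; and the $\ell_p$ and $\ell_q$ membership of your eigenvectors requires, in addition to the block ratios tending to a limit less than $1$, that the within-block factors $|b_j|/|a_{j+1}-\lambda|$ and $|\lambda-a_j|/|b_j|$ be uniformly bounded, which holds because $\{a_j\}$ and $\{b_j\}$ are bounded while $|a_j-\lambda|$ and $|b_j|$ are eventually bounded away from zero.
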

\begin{proof}
First we prove that $\sigma(\Delta_{a,b}, l_p)\subseteq S_1\cup S_2$. For this, we show that if $\lambda\notin S_1\cup S_2$, then $(\Delta_{a,b}-\lambda I)^{-1}$ exists and is in $B(\ell_p)$.\\
Let $\lambda \notin S_1 \cup S_2$. Then $$|\lambda - p_1||\lambda - p_2| \cdots |\lambda - p_m| > |q_1 q_2 \cdots q_m|$$ and $\lambda \neq a_k$ for all $k \in \mathbb{N}.$ So, $(\Delta_{a,b}-\lambda I)$ is triangle and hence $(\Delta_{a,b}-\lambda I)^{-1}$ exists.\\
Let $y=\{y_k\}\in\ell_p$. Then solving the equation
\begin{equation*}
\Big(\Delta_{a,b}-\lambda I\Big) \ x = y,
\end{equation*}
for $x=\{x_k\}$ in terms of $y$, we get
\begin{equation*}
x_k=\frac{(-1)^{k-1}b_1b_2\cdots b_{k-1}}{(a_1 - \lambda)(a_2 - \lambda)\cdots (a_k - \lambda)} \ y_1 +\cdots - \frac{b_{k-1}}{(a_{k-1} - \lambda)(a_k - \lambda)} \ y_{k-1} + \frac{1}{(a_k - \lambda)} \ y_k
\end{equation*}
for $k\in\mathbb{N}$. Now
\begin{equation}\label{mainop}
   (\Delta_{a,b}-\lambda I)^{-1}=
  \left( {\begin{array}{cccccc}
   \frac{1}{(a_1 - \lambda)}   & 0      & 0      & 0  & \cdots \\\\
   \frac{-b_1}{(a_1 - \lambda)(a_2 - \lambda)}   & \frac{1}{(a_2 - \lambda)}    & 0 & 0 & \cdots \\\\
   \frac{b_1b_2}{(a_1 - \lambda)(a_2 - \lambda)(a_3 - \lambda)}& \frac{-b_2}{(a_2 - \lambda)(a_3 - \lambda)} & \frac{1}{(a_3 - \lambda)}& 0 & \cdots \\
  \vdots & \vdots & \vdots & \vdots & \vdots
  \end{array} } \right)
\end{equation}
Let $C_k$ be the $\ell_1$ - norm of the $k$ - th column of the matrix $(\Delta_{a,b}-\lambda I)^{-1}$. Therefore we need to show that $\sup\limits_k C_k < \infty$. Now, from the above matrix,
\be
C_k=\frac{1}{|a_k - \lambda|} + \left| \frac{b_k}{(a_k - \lambda)(a_{k+1}- \lambda)} \right| +\left| \frac{b_kb_{k+1}}{(a_k - \lambda)(a_{k+1}- \lambda)(a_{k+2}- \lambda)} \right|+ \cdots ,
\ee
for $k \in \mathbb{N}$. By rearranging the terms (i.e. grouping multiple of $m$ terms) of $C_k$
 we have
{\fontsize{8}{7}\begin{eqnarray*}
C_k &=& \frac{1}{|a_k - \lambda|} \Bigg[ 1+ \left|  \frac{b_k b_{k+1}\cdots b_{k+m-1}}{(a_{k+1} - \lambda)(a_{k+2} - \lambda)\cdots (a_{k+m} - \lambda)} \right| \\
&& + \left| \Bigg( \frac{(b_k b_{k+1}\cdots b_{k+m-1})}{(a_{k+1} - \lambda)\cdots (a_{k+m} - \lambda)}\Bigg)\cdot \Bigg(\frac{(b_{k+m}\cdots b_{k+2m-1})}{(a_{k+m+1} - \lambda)\cdots (a_{k+2m} - \lambda)}\Bigg)\right|\\
&& +  \left| \Bigg( \frac{(b_k b_{k+1}\cdots b_{k+m-1})}{(a_{k+1} - \lambda)\cdots (a_{k+m} - \lambda)}\Bigg) \cdot \Bigg( \frac{(b_{k+m}\cdots b_{k+2m-1})}{(a_{k+m+1} - \lambda)\cdots (a_{k+2m} - \lambda)} \Bigg) \cdot \Bigg(\frac{(b_{k+2m}\cdots b_{k+3m-1})}{(a_{k+2m+1} - \lambda)\cdots (a_{k+3m} - \lambda)}\Bigg)\right|\\
&& + \cdots \Bigg] + \left| \frac{b_k}{(a_k - \lambda)(a_{k+1}- \lambda)} \right| \Bigg[ 1+ \left|  \frac{ b_{k+1}\cdots b_{k+m}}{(a_{k+2} - \lambda)\cdots (a_{k+m+1} - \lambda)} \right|\\
&& + \left| \Bigg( \frac{ (b_{k+1}\cdots b_{k+m})}{(a_{k+2} - \lambda)\cdots (a_{k+m+1} - \lambda)} \Bigg) \cdot \Bigg(\frac{(b_{k+m+1}\cdots b_{k+2m})}{(a_{k+m+2} - \lambda)\cdots (a_{k+2m+1} - \lambda)}\Bigg)\right| \\
&& +  \left|  \Bigg(\frac{(b_{k+1}\cdots b_{k+m})}{(a_{k+2} - \lambda)\cdots (a_{k+m+1} - \lambda)}\Bigg) \cdot \Bigg(\frac{(b_{k+m+1}\cdots b_{k+2m})}{(a_{k+m+2} - \lambda)\cdots (a_{k+2m+1} - \lambda)}\Big) \cdot \Bigg(\frac{(b_{k+2m+1}\cdots b_{k+3m})}{(a_{k+2m+2} - \lambda)\cdots (a_{k+3m+1} - \lambda)}\Bigg) \right| \\
&& + \cdots \Bigg] + \cdots + \left|  \frac{b_k b_{k+1}\cdots b_{k+m-2}}{(a_{k} - \lambda)(a_{k+1} - \lambda)\cdots (a_{k+m-1} - \lambda)} \right| \Bigg[ 1+ \left|  \frac{b_{k+m-1}\cdots b_{k+2m-2}}{(a_{k+m} - \lambda) \cdots (a_{k+2m-1} - \lambda)} \right| \\
&& + \left| \Bigg( \frac{(b_{k+m-1}\cdots b_{k+2m-2})}{(a_{k+m} - \lambda) \cdots (a_{k+2m-1} - \lambda)}\Bigg) \cdot \Bigg(\frac{(b_{k+2m-1}\cdots b_{k+3m-2})}{(a_{k+2m} - \lambda) \cdots (a_{k+3m-1} - \lambda)} \Bigg)\right| + \cdots \Bigg].
\end{eqnarray*}}
Further,
\begin{eqnarray*}
&\lim\limits_{n \rightarrow \infty}& \left| \frac{b_{k+nm}b_{k+nm+1}\cdots b_{k+(n+1)m-1}}{(a_{k+nm+1} - \lambda)(a_{k+nm+2} - \lambda) \cdots (a_{k+(n+1)m} - \lambda)} \right|\\
&& = \left| \frac{q_1q_2\cdots q_m}{(p_1 - \lambda)(p_2 - \lambda)\cdots (p_m - \lambda)} \right| < 1,\\
&\lim\limits_{n \rightarrow \infty}& \left| \frac{b_{k+nm+1}b_{k+nm+2}\cdots b_{k+(n+1)m}}{(a_{k+nm+2} - \lambda)(a_{k+nm+3} - \lambda) \cdots (a_{k+(n+1)m+1} - \lambda)} \right|\\
&& = \left| \frac{q_1q_2\cdots q_m}{(p_1 - \lambda)(p_2 - \lambda)\cdots (p_m - \lambda)} \right| < 1,\\
&& \hspace{2cm}\vdots \\
&\lim\limits_{n \rightarrow \infty}& \left| \frac{b_{k+(n+1)m-1}b_{k+(n+1)m}\cdots b_{k+(n+2)m-2}}{(a_{k+(n+1)m} - \lambda)(a_{k+(n+1)m+1} - \lambda) \cdots (a_{k+(n+2)m-1} - \lambda)} \right|\\
&& = \left| \frac{q_1q_2\cdots q_m}{(p_1 - \lambda)(p_2 - \lambda)\cdots (p_m - \lambda)} \right| < 1.
\end{eqnarray*}
shows that each series in R.H.S. is convergent, so $C_k$ is convergent for all $k\in\mathbb{N}$. Since
\begin{equation*}
\lim\limits_{k\to\infty} \Big| \frac{b_k b_{k+1} \cdots b_{k+(m-1)}}{(a_{k+1} - \lambda)(a_{k+2} - \lambda) \cdots (a_{k+m} - \lambda)} \Big| = \Big| \frac{q_1 q_2 \cdots q_m}{(p_1 - \lambda)(p_2 - \lambda)\cdots (p_m - \lambda)} \Big| < 1,
\end{equation*}
so there exists $N \in \mathbb{N}$ and a real number $\gamma < 1$ such that
\begin{equation*}
\Big| \frac{b_k b_{k+1} \cdots b_{k+(m-1)}}{(a_{k+1} - \lambda)(a_{k+2} - \lambda) \cdots (a_{k+m} - \lambda)} \Big|  < \gamma, \ \ \mbox{ for all } \ \forall \ \ k\geq N
\end{equation*}
Now for $k \geq N$
\begin{eqnarray*}
C_k &\leq & \frac{1}{|a_k - \lambda |} [1 + \gamma + \gamma ^2 + \cdots] + \left|\frac{b_k}{(a_k - \lambda)(a_{k+1} - \lambda)} \right| [1 + \gamma + \gamma ^2 + \cdots] + \cdots \\
&& + \left|  \frac{b_k b_{k+1}\cdots b_{k+m-2}}{(a_{k} - \lambda)(a_{k+1} - \lambda)\cdots (a_{k+m-1} - \lambda)} \right| [1 + \gamma + \gamma ^2 + \cdots].
\end{eqnarray*}
Again by Lemma \ref{lm3.1} there exists $N_0 \in \mathbb{N}$ and a real number $\alpha>0$ such that
for all $k \geq N_0$
\begin{eqnarray*}
&& \frac{1}{|a_k - \lambda |} \leq \alpha, \\
&& \left|\frac{b_k}{(a_k - \lambda)(a_{k+1} - \lambda)} \right| \leq \alpha,\\
&& \hspace{2cm}\vdots \\
&& \left|  \frac{b_k b_{k+1}\cdots b_{k+m-2}}{(a_{k} - \lambda)(a_{k+1} - \lambda)\cdots (a_{k+m-1} - \lambda)} \right| \leq \alpha.
\end{eqnarray*}
Thus $C_k \leq m \frac{\alpha}{1-\gamma}$ for all $k \geq \max\{N,N_0\}$. Therefore $\sup\limits_{k} C_k < \infty .$ This shows that $(\Delta_{a,b}-\lambda I)^{-1} \in (\ell_1 : \ell_1)$.\\
Next we show that $(\Delta_{a,b}-\lambda I)^{-1} \in (\ell_\infty : \ell_\infty)$ i.e. it is required to prove that supremum of $\ell_1$ norm of rows is finite. For this, let $\beta\in\mathbb{N}$. So $\beta = mk+j$ where $k \in \mathbb{N} \cup \{0\}$ and $j = 0,1, \cdots,m-1$. Let $R_{\beta}$ denote the $\beta$ - th row of $(\Delta_{a,b}-\lambda I)^{-1}$. Now for $\beta\geq m$, $R_{\beta} = \sum\limits_{r=1} ^{m} R_{mk+j}^{(r)}$ where $R_{\beta} ^{(r)}$ denotes the $r$ - th term of the series $R_{\beta}$ for $1 \leq r \leq m$.\\
The term $R_{mk+j} ^{(r)}$ is nothing but the following expressions.
\begin{enumerate}[(I)]
\item When $r=1$, \begin{enumerate}[(A)]
\item For $j\neq 0$,
{\fontsize{10}{9}\begin{eqnarray*}
&& R_{mk+j}^{(1)} = \frac{1}{|a_{km+j} - \lambda|}\left[ 1+ \left| \frac{b_{km-1+j}b_{km-1-1+j} \cdots b_{km-1-(m-1)+j}}{(a_{km-1+j}  -\lambda)(a_{km-1-1+j} - \lambda) \cdots (a_{km-1-(m-1)+j}- \lambda)} \right| \right. \\
&& + \left| \frac{b_{km-1+j}\cdots b_{km-1-(m-1)+j} b_{m(k-1)-1+j} \cdots b_{m(k-1)-1-(m-1)+j}}{(a_{km-1+j}  -\lambda) \cdots(a_{km-1-(m-1)+j} - \lambda) (a_{m(k-1)-1+j} \lambda) \cdots (a_{m(k-1)-1-(m-1)+j}- \lambda)} \right| \\
&&  + \cdots + \left| \frac{b_{km-1+j}\cdots b_{km-1-(m-1)+j} b_{m(k-1)-1+j} \cdots b_{m(k-1)-1-(m-1)+j}\cdots}{(a_{km-1+j}  -\lambda) \cdots(a_{km-1-(m-1)+j} - \lambda)\cdots} \right.\\
&& \left. \left. \times \frac{ b_{m(k-(k-2))-1+j}\cdots b_{m(k-(k-2))-1-(m-1)+j}b_{m(k-(k-1))-1+j}\cdots b_{m(k-(k-1))-1-(m-1)+j}}{ \cdots (a_{m(k-(k-1))-1-(m-1)+j}- \lambda)} \right| \right]
\end{eqnarray*}}
\item For $j=0$,
{\fontsize{8}{8}\begin{eqnarray*}
&& R_{mk}^{(1)} = \frac{1}{|a_{mk}-\lambda|}\Bigg[ 1 + \left| \frac{b_{km-1}b_{km-1-1} \cdots b_{km-1-(m-1)}}{(a_{km-1}  -\lambda)(a_{km-1-1} - \lambda) \cdots (a_{km-1-(m-1)}- \lambda)} \right| \\
&&+ \left| \frac{b_{km-1} \cdots b_{km-1-(m-1)}b_{m(k-1)-1}\cdots b_{m(k-1)-1-(m-1)}}{(a_{km-1}  -\lambda) \cdots (a_{km-1-(m-1)}  -\lambda)(a_{m(k-1)-1}  -\lambda)\cdots (a_{m(k-1)-1-(m-1)}- \lambda)} \right| + \cdots \\
&&+ \left| \frac{b_{km-1} \cdots b_{km-1-(m-1)} \cdots b_{m(k-(k-2))-1} \cdots b_{m(k-(k-2))-1-(m-1)}}{(a_{km-1}  -\lambda) \cdots (a_{km-1-(m-1)}  -\lambda) \cdots (a_{m(k-(k-2))-1}  -\lambda) \cdots (a_{m(k-(k-2))-1-(m-1)}- \lambda)} \right|\Bigg]
\end{eqnarray*}}
\end{enumerate}
\item when $r \leq j$ then
{\fontsize{10}{9}\begin{eqnarray*}
&&R_{mk+j}^{(r)} =  \left| \frac{b_{km-1+j}b_{km-1-1+j} \cdots b_{km-(r-1)+j}}{(a_{km+j}  -\lambda)(a_{km-1+j} - \lambda) \cdots (a_{km-(r-1)+j}- \lambda)} \right| \Bigg[ 1 + \\
&& \left| \frac{b_{km-r+j}b_{km-r-1+j} \cdots b_{km-r-(m-1)+j}}{(a_{km-r+j}  -\lambda)(a_{km-r-1+j} - \lambda) \cdots (a_{km-r-(m-1)+j}- \lambda)} \right| \\
&&+ \left| \frac{b_{km-r+j}b_{km-r-1+j} \cdots b_{km-r-(m-1)+j}b_{m(k-1)-r+j}\cdots b_{m(k-1)-r-(m-1)+j}}{(a_{km-r+j}  -\lambda) \cdots (a_{m(k-1)-r-(m-1)+j}- \lambda)} \right| + \cdots \\
&&+ \left| \frac{b_{km-r+j} \cdots b_{km-r-(m-1)+j} \cdots b_{m(k-(k-2))-r+j} \cdots b_{m(k-(k-2))-r-(m-1)+j}}{(a_{km-r+j}  -\lambda) \cdots (a_{m(k-(k-2))-r-(m-1)+j}- \lambda)} \right| \\
&&+ \left| \frac{b_{km-r+j}\cdots b_{km-r-(m-1)+j} b_{m(k-1)-r+j} \cdots b_{m(k-1)-r-(m-1)+j}\cdots}{(a_{km-r+j}  -\lambda) \cdots(a_{km-r-(m-1)+j} - \lambda)\cdots} \right.\\
&& \left. \times \frac{ b_{m(k-(k-2))-r+j}\cdots b_{m(k-(k-2))-r-(m-1)+j}b_{m(k-(k-1))-r+j}\cdots b_{m(k-(k-1))-r-(m-1)+j}}{ \cdots (a_{m(k-(k-1))-r-(m-1)+j}- \lambda)} \right|\Bigg].
\end{eqnarray*}}
\item and when $r > j$ then
{\fontsize{10}{9}\begin{eqnarray*}
&&R_{mk+j}^{(r)} =  \left| \frac{b_{km-1+j}b_{km-1-1+j} \cdots b_{km-(r-1)+j}}{(a_{km+j}  -\lambda) \cdots (a_{km-(r-1)+j}- \lambda)} \right| \Bigg[ 1 + \\
&& \left| \frac{b_{km-r+j}b_{km-r-1+j} \cdots b_{km-r-(m-1)+j}}{(a_{km-r+j}  -\lambda) \cdots (a_{km-r-(m-1)+j}- \lambda)} \right| \\
&&+ \left| \frac{b_{km-r+j}b_{km-r-1+j} \cdots b_{km-r-(m-1)+j}b_{m(k-1)-r+j}\cdots b_{m(k-1)-r-(m-1)+j}}{(a_{km-r+j}  -\lambda) \cdots (a_{m(k-1)-r-(m-1)+j}- \lambda)} \right| + \cdots \\
&&+  \left| \frac{b_{km-r+j} \cdots b_{km-r-(m-1)+j} \cdots b_{m(k-(k-2))-r+j} \cdots b_{m(k-(k-2))-r-(m-1)+j}}{(a_{km-r+j}  -\lambda) \cdots (a_{m(k-(k-2))-r-(m-1)+j}- \lambda)} \right|\Bigg].
\end{eqnarray*}}
\end{enumerate}
Since each series $R_k$ is finite for all $k\in\mathbb{N}$, so $R_k$ is convergent for all $k\in\mathbb{N}$ and since
\[\lim\limits_{t \rightarrow \infty} \left|  \frac{b_t b_{t+1} \cdots b_{t+(m-1)}}{(a_t - \lambda) (a_{t+1} - \lambda) \cdots (a_{t+(m-1)}-\lambda)}\right| = \left|  \frac{q_1 q_2 \cdots q_m}{(p_1 - \lambda) (p_2 - \lambda) \cdots (p_m-\lambda)}\right| < 1.\]
So there exists $N_1 \in \mathbb{N}$ with $N_1>m$ and two numbers $\gamma < 1$ and $M_{N_1}\geq 1$ such that for $t \geq N_1$
\[
\frac{b_tb_{t+1}\cdots b_{t+(m-1)}}{(a_t-\lambda)(a_{t+1}-\lambda) \cdots (a_{t+(m-1)}-\lambda)} < \gamma.
\]
If $N_1=mk+j$ with $k\in\mathbb{N}$, $j\in\{0,1,2,\cdots, m-1\}$ and $r\in\{1,2, \cdots , m\}$, then $M_{N_1}=\max\limits_r M_{N_1} ^{(r)}$ where
\begin{eqnarray*}
M_{N_1} ^{(r)} &=& \begin{cases}
1 + \sum_{n=0} ^k \prod_{l=0} ^n P_r ^{l+1}, & \mbox{ if } j-r>0\\
1 + \sum_{n=0} ^{k-1} \prod_{l=0} ^n P_r ^{l+1}, & \mbox{ if } j-r\leq 0
\end{cases}\\
\mbox{and } \ P_r ^{l+1} &=& \bigg| \frac{b_{m(k-l)+j-r+(m-1)} \cdots b_{m(k-l)+j-r}}{(a_{m(k-l)+j-r+(m-1)}-\lambda) \cdots (a_{m(k-l)+j-r}-\lambda)} \bigg|.
\end{eqnarray*}
Now for any $t\geq N_1$,
\begin{eqnarray*}
R_{t} ^{(1)} &\leq& \big| \frac{1}{(a_{t}-\lambda)} \big| \frac{M_{N_1}}{1-q_0}\\
\mbox{and } \ R_{t}^{(r)} &\leq& \left|  \frac{b_{t-1}  \cdots b_{t-(r-1)}}{(a_{t} - \lambda) \cdots (a_{t-(r-1)}-\lambda)}\right|. \frac{M_{N_1}}{1-q_0} \ \mbox{ for } \ r>1
\end{eqnarray*}
Again from Lemma \ref{lm3.1} there exists $N_0 \in \mathbb{N}$ and a real number $\alpha >0$ such that for $(t -(m-1))\geq N_0$,
\begin{eqnarray*}
&& \frac{1}{|a_{t} - \lambda |} \leq \alpha, \\
&& \left|\frac{b_{t-1}}{(a_{t} - \lambda)(a_{t-1} - \lambda)} \right| \leq \alpha, \\
&& \hspace{2cm} \vdots \\
&& \left|  \frac{b_{t-(m-2)} \cdots b_{t-1}b_t}{(a_{t-(m-1)} - \lambda)(a_{t-(m-2)} - \lambda)\cdots (a_{t-1} - \lambda)(a_t-\lambda)} \right|\leq \alpha.
\end{eqnarray*}
Hence for $t\geq \max\{N_1,\big(N_0+(m-1)\big)\}$,
\[R_t ^{(r)} \leq \frac{\alpha M_{N_1}}{1-\gamma}.\]
Thus for $t\geq \max\{N_1,\big(N_0+(m-1)\big)\}$,
\[R_t \leq \Big(\frac{\alpha M_{N_1}}{1-\gamma}\Big) \ m.\]
Therefore $\sup\limits_{t} R_{t} < \infty.$ This shows that $(\Delta_{a,b}-\lambda I)^{-1} \in (\ell_\infty : \ell_\infty)$. Hence $\sigma(\Delta_{a,b}, \ell_p) \subseteq S_1 \cup S_2$.\\
Further, suppose that $\lambda \notin \sigma(\Delta_{a,b}, \ell_p)$ i.e. $\lambda$ is not in the spectrum of $\Delta_{a,b}$. Then $(\Delta_{a,b}-\lambda I)^{-1} \in B(\ell_p)$.
So, $\lambda\notin \sigma(\Delta_{a,b}, \ell_p)$ implies $\lambda\neq a_k$ for all $k\in\mathbb{N}$ and the operator $(\Delta_{a,b}-\lambda I)^{-1}$ transforms the unit sequence $e = (1, 0, 0, \cdots)^t$ into a sequence in $\ell_p$, so we have
\begin{equation*}
(\Delta_{a,b}-\lambda I)^{-1}e = \left( \frac{1}{a_1 - \lambda }, \frac{-b_1}{(a_1 - \lambda)(a_{2} - \lambda)},  \frac{b_1b_2}{(a_1 - \lambda)(a_{2} - \lambda)(a_3 - \lambda)}, \cdots \right)^t \in l_p.
\end{equation*}
Which implies
{\fontsize{10}{10}\bea
\frac{1}{|a_1 - \lambda|^p }+ \left|\frac{b_1}{(a_1 - \lambda)(a_{2} - \lambda)}\right|^p +\left| \frac{b_1b_2}{(a_1 - \lambda)(a_{2} - \lambda)(a_3 - \lambda)}\right|^p + \cdots < \infty
\eea
which gives
\begin{equation*}
\lambda \notin \Bigg\{ \lambda \in \mathbb{C} : \big| q_1q_2\cdots q_m \big| > \big|(p_1 - \lambda)(p_2 - \lambda)\cdots (p_m - \lambda) \big| \Bigg\}
\end{equation*}
otherwise,
\begin{equation*}
 \big| q_1q_2\cdots q_m \big| > \big| (p_1 - \lambda)(p_2 - \lambda)\cdots (p_m - \lambda) \big| \ \ \mbox{ since } \ \ 1<p<\infty
\end{equation*}
}
and hence the following series
{\fontsize{9}{9}
\[ \frac{1}{|a_1-\lambda|^p} + \Big|\frac{b_1b_2\cdots b_m}{(a_1-\lambda)(a_2-\lambda) \cdots (a_{m+1}-\lambda)}\Big|^p + \Big|\frac{b_1b_2\cdots b_m b_{m+1}\cdots b_{2m}}{(a_1-\lambda)(a_2-\lambda) \cdots (a_{m+1}-\lambda)(a_{m+2}-\lambda)(a_{2m+1}-\lambda)}\Big|^p + \cdots\]}
is divergent. So by comparison test  $(\Delta_{a,b}-\lambda I)^{-1}e$ is divergent, which is a contradiction. Hence
\[ \Bigg\{ \lambda \in \mathbb{C} : \big| q_1q_2\cdots q_m \big| > \big| (p_1 - \lambda)(p_2 - \lambda)\cdots (p_m - \lambda) \big| \Bigg\} \subseteq \sigma(\Delta_{a,b}, \ell_p) \]
Since $\sigma(\Delta_{a,b}, \ell_p)$ is compact, so
\[ \Bigg\{ \lambda \in \mathbb{C} : \big| (p_1 - \lambda)(p_2 - \lambda)\cdots (p_m - \lambda) \big| \leq \big|q_1q_2\cdots q_m\big| \Bigg\} \subseteq \sigma(\Delta_{a,b}, \ell_p) \]
Thus $S_1 \cup S_2 \subseteq \sigma(\Delta_{a,b}, \ell_p)$. This completes the proof.
\end{proof}

\begin{lemma}\label{lemma3.2}
Assume $a_j\neq a_{j+k}$ for all $k\in\mathbb{N}$. Then $a_j\in S_2 \cup S_3$ if and only if the following condition holds
$$\sum\limits_{i = j} ^{\infty} \left| \frac{b_j b_{j+1} \cdots b_{i}}{(a_{j+1} - a_j)(a_{j+2}- a_j) \cdots (a_{i+1} - a_j)} \right|^p< \infty$$
\end{lemma}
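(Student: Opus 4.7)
Set
\[ c_i \;:=\; \left| \frac{b_j b_{j+1} \cdots b_i}{(a_{j+1} - a_j)(a_{j+2} - a_j) \cdots (a_{i+1} - a_j)} \right|^p, \qquad i \geq j, \]
so that the series in the lemma is $\sum_{i \geq j} c_i$; the standing hypothesis $a_j \neq a_{j+k}$ for every $k \in \mathbb{N}$ makes every $c_i$ well-defined. My plan is to run an $m$-step ratio test on this series. A direct calculation gives
\begin{equation*}
\frac{c_{i+m}}{c_i} \;=\; \left| \frac{b_{i+1} b_{i+2} \cdots b_{i+m}}{(a_{i+2} - a_j)(a_{i+3} - a_j) \cdots (a_{i+m+1} - a_j)} \right|^p,
\end{equation*}
and since the indices $i+1,\ldots,i+m$ exhaust every residue class modulo $m$, the periodicity/convergence hypotheses on $\{a_k\}$ and $\{b_k\}$ (together with commutativity of multiplication, which erases the residue-dependent permutation of the limits) yield
\begin{equation*}
\frac{c_{i+m}}{c_i} \;\longrightarrow\; L^p, \qquad L \;:=\; \left| \frac{q_1 q_2 \cdots q_m}{(p_1 - a_j)(p_2 - a_j) \cdots (p_m - a_j)} \right|,
\end{equation*}
uniformly in the residue of $i$ modulo $m$. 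Crucially, the trichotomy $L<1$, $L=1$, $L>1$ corresponds precisely to $a_j$ lying strictly outside, on the boundary of, or strictly inside $S_1$.

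For the \emph{if} direction I would split $\{c_i\}_{i \geq j}$ into the $m$ subsequences indexed by residue class modulo $m$; each has asymptotic consecutive-term ratio $L^p$. If $a_j \in S_2$, then $L<1$ and each such subsequence decays geometrically, yielding $\sum c_i < \infty$. If $a_j \in S_3$, the $S_3$-defining series is summable for some $s \geq j$, and for $i \geq s$ the factorisation
\begin{equation*}
c_i \;=\; \left| \frac{b_j \cdots b_{s-1}}{(a_{j+1}-a_j) \cdots (a_s - a_j)} \right|^p \left| \frac{b_s b_{s+1} \cdots b_i}{(a_{s+1} - a_j)(a_{s+2} - a_j) \cdots (a_{i+1} - a_j)} \right|^p
\end{equation*}
expresses $\sum_{i \geq j} c_i$ as a finite initial segment plus a positive constant multiple of that summable $S_3$-series, which is finite.

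For the \emph{only if} direction I assume $\sum_{i \geq j} c_i < \infty$. If $a_j \notin S_1$, then $a_j \in S_2$ by definition and we are done. Otherwise $a_j \in S_1$, so $L \geq 1$; but $L>1$ would force $c_{i+m}/c_i > 1$ eventually along every residue class and hence $c_i \not\to 0$, contradicting summability. Thus $L=1$, which is exactly the boundary equation $|a_j-p_1|\cdots|a_j-p_m|=|q_1\cdots q_m|$ in the definition of $S_3$; combined with the summability assumption (which is precisely the $S_3$-condition at $s=j$) this gives $a_j \in S_3$.

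The main (and mild) obstacle is the uniform-in-residue-class verification that $c_{i+m}/c_i \to L^p$: for each fixed residue of $i$ modulo $m$ the $m$ numerators $b_{i+1},\ldots,b_{i+m}$ individually tend to some permutation of $(q_1,\ldots,q_m)$, and similarly for the denominators, but commutativity of the product erases the dependence on the residue. Once this is in place, everything else reduces to a standard geometric-series comparison.
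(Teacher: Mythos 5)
Your proof is correct and takes essentially the same route as the paper's own (very terse) argument: your $m$-step ratio test is exactly the paper's ``grouping of multiples of $m$ terms,'' and your trichotomy on $L$ together with the observation that the boundary case plus summability is precisely the $S_3$-condition at $s=j$ matches the paper's converse. You simply supply the details (the residue-class limit computation, the factorisation through the $S_3$ witness) that the paper leaves to the reader.
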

\begin{proof}
By grouping multiple of $m$ terms, one can easily show that if $a_j\in S_2$, then
\begin{eqnarray*}
&& \sum\limits_{i = j} ^{\infty} \left| \frac{b_j b_{j+1} \cdots b_{i}}{(a_{j+1} - a_j)(a_{j+2}- a_j) \cdots (a_{i+1} - a_j)} \right|^p< \infty\\
\mbox{i.e. } && a_j \in S_2 \cup S_3 \implies \sum\limits_{i = j} ^{\infty} \left| \frac{b_j b_{j+1} \cdots b_{i}}{(a_{j+1} - a_j)(a_{j+2}- a_j) \cdots (a_{i+1} - a_j)} \right|^p< \infty
\end{eqnarray*}
Conversely, let us assume $\Big|\frac{(a_j-p_1)(a_j-p_2)\cdots (a_j-p_m)}{q_1 q_2\cdots q_m}\Big| < 1 $, then one can easily prove that the series
\[\sum\limits_{i = j}^{\infty} \left| \frac{b_j b_{j+1} \cdots b_{i}}{(a_{j+1} - a_j)(a_{j+2}- a_j) \cdots (a_{i+1} - a_j)} \right|^p\] is divergent. Thus we have
\[ \sum\limits_{i = j}^{\infty} \left| \frac{b_j b_{j+1} \cdots b_{i}}{(a_{j+1} - a_j)(a_{j+2}- a_j) \cdots (a_{i+1} - a_j)} \right|^p <\infty \ \mbox{ if and only if } \ a_j\in S_2\cup S_3 .\]
\end{proof}

\begin{theorem}\label{point_p}
$\sigma_p(\Delta_{a,b}, l_p)=S=S_2 \cup S_3$.
\end{theorem}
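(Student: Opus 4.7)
The plan is to unfold the eigenvalue equation $\Delta_{a,b}x=\lambda x$ into the triangular recurrence
\[
(a_1-\lambda)x_1=0,\qquad (a_k-\lambda)x_k=-b_{k-1}x_{k-1}\quad(k\geq 2),
\]
and to analyse its solutions block-by-block across the set $J=\{k\in\mathbb{N}:a_k=\lambda\}$. For the inclusion $\sigma_p(\Delta_{a,b},\ell_p)\subseteq S_2\cup S_3$ I would first rule out $\lambda\notin\{a_k\}$: there $x_1=0$ and forward iteration gives $x\equiv 0$. Assuming $\lambda=a_k$ for some $k$, the equation at each $k\in J$ with $k\geq 2$ reads $b_{k-1}x_{k-1}=0$, hence $x_{k-1}=0$; between two consecutive elements $j<j'$ of $J$ the recurrence is uniquely solvable and yields
\[
x_{j'-1}=\frac{(-1)^{j'-1-j}\,b_j b_{j+1}\cdots b_{j'-2}}{(a_{j+1}-\lambda)\cdots(a_{j'-1}-\lambda)}\,x_j
\]
with nonzero coefficient, so $x_{j'-1}=0$ forces $x_j=0$ and the whole block $[j,j'-1]$ vanishes. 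Therefore $J$ must be finite and, setting $j^*=\max J$, the eigenvector is supported on $[j^*,\infty)$ by
\[
x_i=(-1)^{i-j^*}\,\frac{b_{j^*}b_{j^*+1}\cdots b_{i-1}}{(a_{j^*+1}-\lambda)\cdots(a_i-\lambda)}\,x_{j^*}\qquad(i>j^*),
\]
where $a_{j^*+\ell}\neq\lambda$ for every $\ell\geq 1$ by maximality. The condition $x\in\ell_p$ is then exactly the summability
\[
\sum_{i=j^*}^{\infty}\left|\frac{b_{j^*}b_{j^*+1}\cdots b_i}{(a_{j^*+1}-\lambda)\cdots(a_{i+1}-\lambda)}\right|^p<\infty,
\]
which by Lemma \ref{lemma3.2} characterises $\lambda=a_{j^*}$ as an element of $S_2\cup S_3$.

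For the converse $S_2\cup S_3\subseteq\sigma_p(\Delta_{a,b},\ell_p)$ I would take $\lambda=a_k\in S_2\cup S_3$ and reverse the construction. The preparatory observation is that $\lambda\neq p_i$ for every $i\in\{1,\ldots,m\}$, since $|\lambda-p_1|\cdots|\lambda-p_m|\geq|q_1\cdots q_m|>0$ in both cases; together with $a_{km-i}\to p_{m-i}$ this forces $\{\ell:a_\ell=\lambda\}$ to be finite, so $j^*:=\max\{\ell:a_\ell=\lambda\}$ exists and $a_{j^*+\ell}\neq\lambda$ for $\ell\geq 1$. For $\lambda\in S_2$, Lemma \ref{lemma3.2} directly gives the displayed series convergence anchored at $j^*$. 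For $\lambda\in S_3$ the definition supplies a convergent tail at some $s\geq k$, and necessarily $s\geq j^*$ (otherwise the zero factor $a_{j^*}-\lambda$ would appear in the denominator); since the tails at $s$ and at $j^*$ differ only by the nonzero multiplicative constant $\prod_{t=j^*}^{s-1}|b_t/(a_{t+1}-\lambda)|^p$ plus finitely many initial terms, convergence at $j^*$ follows. The vector $x$ defined by $x_i=0$ for $i<j^*$, $x_{j^*}=1$, and $x_i=-b_{i-1}x_{i-1}/(a_i-\lambda)$ for $i>j^*$ is then a nonzero element of $\ell_p$ satisfying $\Delta_{a,b}x=\lambda x$.

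The main obstacle is the bookkeeping that identifies the correct ``anchor'' $j^*$ for the eigenvector and reconciles it with the indexing freedom built into the definition of $S_3$ (where $s$ is only required to satisfy $k\leq s$). Once one recognises that the block-vanishing argument forces the eigenvector to be anchored precisely at $\max\{k:a_k=\lambda\}$, and that the $S_3$ definition is automatically compatible with this canonical choice, both inclusions reduce cleanly to a single application of Lemma \ref{lemma3.2}.
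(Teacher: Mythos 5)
Your proposal is correct and follows essentially the same route as the paper: reduce $\Delta_{a,b}x=\lambda x$ to the two-term recurrence, use backward substitution at the occurrences of $\lambda$ in $\{a_k\}$ to force the eigenvector to be anchored at the last such occurrence, and then convert the resulting $\ell_p$-condition into membership in $S_2\cup S_3$ via Lemma~\ref{lemma3.2}. The only cosmetic difference is that your block-vanishing argument on $J=\{k:a_k=\lambda\}$ treats the periodic case (where $J$ is infinite, so no eigenvector exists) and the convergent case uniformly, and makes explicit the compatibility between the anchor $j^*=\max J$ and the index $s$ appearing in the definition of $S_3$, points the paper handles by a case split and leaves partly implicit.
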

\begin{proof}
Suppose $\Delta_{a,b}x = \lambda x$ for any $x \in l_p$. Then we have the following system of linear equations
\begin{eqnarray*}
a_1 x_1 &= &\lambda x_1 \\
b_1 x_1 + a_2 x_2 &=& \lambda x_2 \\
b_2 x_2 + a_3 x_3 &=& \lambda x_3\\
& \vdots
\end{eqnarray*}
From the above equations, we obtain
\begin{eqnarray*}
(a_1 -\lambda)x_1 &=& 0\\
\mbox{and } \ b_kx_k+(a_{k+1} - \lambda)x_{k+1} &=& 0, \ \mbox{ for all } \ k \in \mathbb{N}.
\end{eqnarray*}
Now for all $\lambda \notin \{a_k: k \in \mathbb{N}\},$ we have $x_k=0$ for all $k \in \mathbb{N}$. So for $\lambda\neq a_k$ for all $k \in \mathbb{N}$, there does not exist any $x\neq\theta\in\ell_p$ such that $\Delta_{a,b}x = \lambda x$ holds. Thus if $\lambda\neq a_k$ for all $k\in\mathbb{N}$, then $\lambda \notin \sigma_p(\Delta_{a,b}, l_p)$. Consequently $\sigma_p(\Delta_{a,b}, l_p) \subseteq \{a_k: k \in \mathbb{N}\}$.\\
Now we consider two cases.\\
\textbf{\underline{Case 1} :} Let $\{a_k\}$ is a periodic sequence of period $m$. Which implies
\[ a_{km-i} = a_{m-i} , \ \ \forall \ \ k \in \mathbb{N} \ \ \mbox{and} \ \ i = 0,1, \cdots, m-1. \]
Now for any $x\in\ell_p$, $\Delta_{a,b}x = \lambda x$ implies
\begin{eqnarray*}
a_1 x_1 &= &\lambda x_1 \\
b_1 x_1 + a_2 x_2 &=& \lambda x_2 \\
& \vdots &\\
b_{m-1} x_{m-1} + a_{m} x_{m} &=& \lambda x_{m}\\
& \vdots &\\
b_{km} x_{km} + a_{1} x_{km+1} &=& \lambda x_{km+1}\\
b_{km+1} x_{km+1} + a_{2} x_{km+2} &=& \lambda x_{km+2}\\
& \vdots &
\end{eqnarray*}
Since $\sigma_p(\Delta_{a,b}, l_p) \subseteq \{a_k: k \in \mathbb{N}\}$ so $\sigma_p(\Delta_{a,b}, l_p) \subseteq \{a_1, a_2, \cdots, a_m\}$ as $\{a_k\}$ is periodic sequence with period $m$.\\
Suppose $\lambda=a_i$ is an eigenvalue of $\Delta_{a,b}$ where $i\in\{1,2,\ldots, m\}$. So there exists a non zero $x\in\ell_p$ such that $\Delta_{a,b} x = a_i x$ hold. Let $x_r$ be the first non-zero entry of  $x=\{x_k\}$.\\
Now there always exists a number $t=i+km$ where $k\in\mathbb{N}$ such that $t>r$. So from the equation
\begin{eqnarray*}
b_{t-1} x_{t-1} + a_t x_t &=& a_i x_t\\
\mbox{i.e. } \ b_{i+km-1} \ x_{i+km-1} + a_{i+km} \ x_{i+km} &=& a_i \ x_{i+km} \ \mbox{ implies } \ x_{i+km-1}=0
\end{eqnarray*}
as $b_{i+km-1}\neq 0$ and $a_{i+km}=a_i$. Also from the equations
\[ b_{i+km-2} \ x_{i+km-2} + a_{i+km-1} \ x_{i+km-1} = a_i \ x_{i+km-1}\]
gives $x_{i+km-2} = 0$, since $b_{i+km-2}\neq 0$ and also from above, we get $x_{i+km-1}=0$\\
Continuing this process we get
\bc
$x_{t-1} = x_{t-2} = \cdots = x_r =0$\\
\ec
which contradicts the fact $x_r\neq 0$.\\
Hence $\lambda=a_i \notin \sigma_p (\Delta_{a,b}, \ell_p)$ for $i\in\{1,2,\ldots, m\}$.\\
So $\sigma_p (\Delta_{a,b}, \ell_p)=\emptyset$.\\
\textbf{\underline{Case 2} :} Let $\{a_k\}$ is a sequence of real numbers such that
\bc
$a_{km-i} \rightarrow p_{m-i} \ \ \ \mbox{as} \ \ k \rightarrow \infty \ \ \mbox{for} \ \ i = 0,1, \cdots, m-1$.\\
\ec
and we know that $\sigma_p (\Delta_{a,b}, \ell_p) \subseteq \big\{ a_k : k\in\mathbb{N}\big\} $. Also if $\lambda=p_i$ for $i\in\{1,2,\ldots, m\}$, then one can easily prove that $\lambda\notin \sigma_p (\Delta_{a,b}, \ell_p)$. So $\sigma_p (\Delta_{a,b}, \ell_p)\subseteq \big\{ a_k : k\in\mathbb{N}\big\}\setminus \{p_1,p_2,\ldots,p_m\}$.\\
Now we prove that $\lambda\in \sigma_p (\Delta_{a,b}, \ell_p)$  if and only if $\lambda\in S$.\\
If $\lambda\in \sigma_p (\Delta_{a,b}, \ell_p)$, then $\lambda=a_j\neq p_i$ for some $j\in\mathbb{N}$ and $i\in\{1,2,\ldots, m\}$ and there exists a non zero $x\in \ell_p$ such that $\Delta_{a,b}x = a_j x$ holds.\\
Again from the equation $\Delta_{a,b}x = \lambda x$ we have
\begin{eqnarray*}
a_1 x_1 &= &\lambda x_1 \\
b_1 x_1 + a_2 x_2 &=& \lambda x_2 \\
& \vdots &\\
b_{m-1} x_{m-1} + a_{m} x_{m} &=& \lambda x_{m}\\
b_{m} x_{m} + a_{m+1} x_{m+1} &=& \lambda x_{m+1}\\
& \vdots &
\end{eqnarray*}
Since $\lambda=a_j\neq p_i$ for $i\in\{1,2,\cdots, m\}$, so there exists a natural number $s\geq j$ such that $a_s=a_j$ and for $n>s$, $a_n\neq a_j$. Clearly, for $\lambda=a_s$, the above equations give $x_1 = x_2 = \cdots = x_{s-1} = 0$ (as $b_k\neq 0$ for all $k\in\mathbb{N}$). Thus the system reduces to
\begin{eqnarray*}
a_s x_s &= &a_s x_s \\
b_s x_s + a_{s+1} x_{s+1} &=& a_s x_{s+1} \\
b_{s+1} x_{s+1} + a_{s+2} x_{s+2} &=& a_s x_{s+2}\\
& \vdots & \\
\mbox{which implies }
|x_l| &=& \left| \frac{b_s b_{s+1} \cdots b_{l-1}}{(a_{s+1} - a_s)(a_{s+2}- a_s) \cdots (a_l - a_s)} \right| |x_s|, \ \ \mbox{for} \ \ l>s.
\end{eqnarray*}
Now, for a non zero $x$, if we choose $x_s\neq 0$, then $a_s \in \sigma_p(\Delta_{a,b}, \ell_p)$ if and only if
\begin{equation*}
\sum\limits_{i = s}^{\infty} \left| \frac{b_s b_{s+1} \cdots b_{i}}{(a_{s+1} - a_s)(a_{s+2}- a_s) \cdots (a_{i+1} - a_s)} \right|^p < \infty .
\end{equation*}
Again by Lemma \ref{lemma3.2},
\[ \sum\limits_{i = s}^{\infty} \left| \frac{b_s b_{s+1} \cdots b_{i}}{(a_{s+1} - a_s)(a_{s+2}- a_s) \cdots (a_{i+1} - a_s)} \right|^p < \infty \ \mbox{ if and only if } \ a_s\in S
\]
Since $a_j=a_s$, so $a_j \in \sigma_p(\Delta_{a,b}, \ell_p)$ if and only if $a_j \in S$.
\end{proof}

\begin{theorem}\label{point_p*}
$\sigma_p(\Delta_{a,b}^*, l_p^*) = S_2 \cup S_4 \cup S_6$
\end{theorem}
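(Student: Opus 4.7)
The plan is to solve the eigenvalue problem for $\Delta_{a,b}^{*}$ explicitly. Since $\ell_p^{*}=\ell_q$ with $1/p+1/q=1$, and $\Delta_{a,b}^{*}$ is given by the transpose of the matrix in \eqref{main}, the eigenvalue equation $\Delta_{a,b}^{*}x=\lambda x$ on $\ell_q$ reads $b_k x_{k+1}=(\lambda-a_k)x_k$ for $k\geq 1$. Iterating this first-order recursion gives the closed form
\[
x_k=\frac{(\lambda-a_1)(\lambda-a_2)\cdots(\lambda-a_{k-1})}{b_1 b_2\cdots b_{k-1}}\,x_1,\qquad k\geq 2,
\]
with $x_1$ arbitrary. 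My task reduces to deciding, for each $\lambda\in\mathbb{C}$, whether some nonzero choice of $x_1$ produces a sequence $\{x_k\}\in\ell_q$.

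I would split into two cases. First, suppose $\lambda=a_j$ for some $j\in\mathbb{N}$. The factor $(\lambda-a_j)$ forces $x_k=0$ for all $k\geq j+1$, so $\{x_k\}$ has finite support and lies in $\ell_q$ automatically. Choosing $x_1\neq 0$ (and taking $j$ minimal with $a_j=\lambda$, so that at least one of $x_1,\ldots,x_j$ is nonzero) produces an eigenvector. Hence every $a_j$ is an eigenvalue. Such an $a_j$ falls into $S_2$ (when $a_j\notin S_1$), into $S_4$ (when $a_j$ lies in the interior of $S_1$), or into $S_6$ (when $a_j\in S_5$, since the series defining $S_6$ then reduces to a finite sum and trivially converges).

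In the second case, $\lambda\neq a_k$ for every $k$. Then no entry of $\{x_k\}$ vanishes, and $x\in\ell_q$ is equivalent to
\[
\sum_{k=2}^{\infty}\left|\frac{(\lambda-a_1)\cdots(\lambda-a_{k-1})}{b_1\cdots b_{k-1}}\right|^{q}<\infty.
\]
The consecutive ratios $|\lambda-a_k|/|b_k|$ need not have a limit under our hypotheses, but the $m$-step ratios satisfy
\[
\left|\frac{(\lambda-a_k)(\lambda-a_{k+1})\cdots(\lambda-a_{k+m-1})}{b_k b_{k+1}\cdots b_{k+m-1}}\right|\longrightarrow L:=\left|\frac{(\lambda-p_1)\cdots(\lambda-p_m)}{q_1\cdots q_m}\right|
\]
as $k\to\infty$ along each residue class modulo $m$. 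Grouping the sum into $m$ subseries by residue class (mimicking the bookkeeping in the proof of Theorem~\ref{sepct_p}) and applying the ratio test to each subseries, convergence holds when $L<1$ (i.e., $\lambda\in S_4$) and fails when $L>1$ (i.e., $\lambda\notin S_1$). In the boundary case $L=1$ (i.e., $\lambda\in S_5$) the ratio test is inconclusive and convergence is exactly the condition defining $S_6$.

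Combining the two cases yields $\sigma_p(\Delta_{a,b}^{*},\ell_p^{*})=S_2\cup S_4\cup S_6$. The main obstacle is the borderline $L=1$ case, which the ratio test cannot resolve; the set $S_6$ is defined precisely to absorb this ambiguity. The secondary technical point is the grouping of the series into $m$ subseries, forced on us by the fact that the limits of $\{a_k\}$ and $\{b_k\}$ exist only along residue classes modulo $m$.
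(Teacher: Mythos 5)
Your proposal is correct and follows essentially the same route as the paper: write the adjoint eigenvalue equation as the first-order recursion $b_k x_{k+1}=(\lambda-a_k)x_k$, treat $\lambda=a_j$ separately (finitely supported eigenvector, so every $a_j$ is an eigenvalue and automatically lies in $S_2\cup S_4\cup S_6$, the $S_5$ case landing in $S_6$ because the series terminates), and for $\lambda\neq a_k$ reduce membership in the point spectrum to convergence of $\sum_k|f_k|^q$, settled by the $m$-step ratio $L$ except on the boundary $L=1$, which is exactly $S_6$. The only differences are cosmetic: the paper handles $\lambda=a_j$ with $j\geq 2$ via an underdetermined homogeneous linear system rather than your minimal-$j$ forward iteration, and it leaves the grouping-by-residue-class ratio-test argument implicit where you spell it out.
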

\begin{proof}
We consider $\Delta_{a,b}^* x = \lambda x$. This gives
\begin{equation}\label{1}
a_k x_k + b_k x_{k+1} = \lambda x_{k}, \ \ \mbox{for} \ \ k \in \mathbb{N}.
\end{equation}
Consider the following cases.
\begin{enumerate}[(i)]
\item If $\lambda=a_1$, then putting $k=1$ in Equation \ref{1}, we get,
\begin{eqnarray*}
a_1 x_1 + b_1 x_2 &=& a_1 x_1 \\
\mbox{i.e. } \ x_2 &=& 0 \ \mbox{ since } \ b_1\neq 0
\end{eqnarray*}
Similarly, if put $k=2$ and $\lambda=a_1$ in Equation \ref{1}, we have
\[ a_2 x_2 + b_2 x_3 = a_1 x_2 \]
Since from above, $x_2=0$ and $b_2\neq0$, we get $x_3=0$.\\
Proceeding in a similar way, we can say $x_4=x_5=\cdots=0$. Hence if we choose $x_1\neq 0$, then there exists a non-zero $x\in \ell_q\cong \ell_p ^*$ such that $\Delta_{a,b}^* x = a_1 x$ holds. Thus $a_1\in \sigma_p(\Delta_{a,b}^*, l_p^*)$.
\item If $\lambda = a_j$ for any $j \in \mathbb{N}\setminus \{1\}$ and $k=j$, then Equation \ref{1}, reduces to
\begin{eqnarray*}
a_j \ x_j + b_j \ x_{j+1} &=& a_j \ x_{j} \\
\Rightarrow x_{j+1} &=& 0 \ \mbox{ since $b_j\neq 0$}.
\end{eqnarray*}
And putting $k=j+1$ and $\lambda=a_j$, we get
\begin{equation*}
a_{j+1} \ x_{j+1} + b_{j+1} \ x_{j+2} = a_{j} \ x_{j+1}
\end{equation*}
Since $b_{j+1}\neq 0$ and $x_{j+1}=0$, so we have $x_{j+2} = 0$. Hence one can conclude $x_{j+1} = x_{j+2} = \cdots = 0.$ Therefore for $\lambda = a_j$ we have
\begin{eqnarray*}
(a_1 - a_j)x_1 + b_1 x_2 &=& 0\\
(a_2 - a_j)x_2 + b_2 x_3 &=& 0\\
& \vdots &\\
(a_{j-1} - a_j)x_{j-1} + b_{j-1} x_j &=& 0.
\end{eqnarray*}
This is a system of $(j-1)$ homogeneous equations with $j$ variables. Hence at least one non-trivial solution exists.\\
This implies $a_j \in \sigma_p(\Delta_{a,b}^*, \ell_p^*)$ for all $j \in \mathbb{N}\setminus \{1\}$.
\item Now let $\lambda \neq a_j$ for all $j \in \mathbb{N}$. Then from Equation \ref{1}, we have
\begin{eqnarray*}
x_k &=& \frac{(\lambda - a_1)(\lambda - a_2) \cdots (\lambda - a_{k-1})}{b_1 b_2 \cdots b_{k-1}} x_1, \ \ \mbox{for} \ \ k=2,3, \cdots .\\
&=& f_k \ x_1  \ \ \mbox{for} \ \ k=2,3, \cdots ,
\end{eqnarray*}
where $f_k=\frac{(\lambda - a_1)(\lambda - a_2) \cdots (\lambda - a_{k-1})}{b_1 b_2 \cdots b_{k-1}}$.\\
Let $\lambda\in \sigma_p(\Delta_{a,b}^*, \ell_p^*)$. Which implies there exists $x\neq \theta$ in $l_p^*$ such that $\Delta_{a,b}^* x=\lambda x$ holds. So for any non zero $x$, if we choose $x_1\neq 0$, then $x\in \ell_p ^* \cong \ell_q$ if and only if the series
\[ \sum\limits_{k=2} ^{\infty} \left|\frac{(\lambda-a_1)(\lambda-a_2)\cdots (\lambda-a_{k-1})}{b_1 b_2 \cdots b_{k-1}}\right|^q < \infty \ \mbox{ i.e. } \ \sum\limits_{k=2} ^{\infty} |f_k|^q<\infty.\]
So $\lambda\in \sigma_p(\Delta_{a,b}^*, \ell_p^*)$ if and only if $\sum\limits_{k=2} ^{\infty} |f_k|^q<\infty$.
Further, if $\lambda\neq a_k$ for $k\in\mathbb{N}$ and $\lambda\in S_4$ then we can easily prove that
$\sum\limits_{k=2} ^{\infty} |f_k|^q<\infty$.
\end{enumerate}
Hence from Case (i), (ii) and (iii), one can conclude that $S_2 \cup S_4 \cup S_6\subseteq \sigma_p(\Delta_{a,b}^*, \ell_p^*)$.\\
Conversely, if $\lambda\notin S_2 \cup S_4 \cup S_6$, then we can easily prove that the series $\sum\limits_{k=2} ^{\infty} |f_k|^q$ is always divergent. So $\lambda\notin \sigma_p(\Delta_{a,b}^*, \ell_p^*)$ i.e. $\sigma_p(\Delta_{a,b}^*, \ell_p^*)\subseteq S_2 \cup S_4 \cup S_6$.\\
Hence one can conclude that $\sigma_p(\Delta_{a,b}^*, \ell_p^*)= S_2 \cup S_4 \cup S_6$.
This completes the proof.
\end{proof}

\begin{theorem}\label{res_p}
$\sigma_r(\Delta_{a,b}, \ell_p)=S_4\cup (S_6\setminus S_3)$.
\end{theorem}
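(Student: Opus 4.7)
The plan is to identify the residual spectrum via its standard characterization in terms of the point spectra of $\Delta_{a,b}$ and its adjoint, and then to carry out a purely set-theoretic simplification using the definitions of $S_2,S_3,S_4,S_5,S_6$.

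First, recall that $\lambda\in\sigma_r(\Delta_{a,b},\ell_p)$ precisely when $(\Delta_{a,b}-\lambda I)$ is injective but does not have dense range. By Lemma \ref{denserange} applied to $(\Delta_{a,b}-\lambda I)$, the range is dense in $\ell_p$ iff $(\Delta_{a,b}-\lambda I)^{*}=\Delta_{a,b}^{*}-\lambda I$ is one-to-one on $\ell_p^{*}\cong\ell_q$. Therefore
\[
\sigma_r(\Delta_{a,b},\ell_p)=\sigma_p(\Delta_{a,b}^{*},\ell_p^{*})\setminus\sigma_p(\Delta_{a,b},\ell_p).
\]
Plugging in Theorems \ref{point_p} and \ref{point_p*} gives
\[
\sigma_r(\Delta_{a,b},\ell_p)=(S_2\cup S_4\cup S_6)\setminus(S_2\cup S_3).
\]

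To finish, I would verify three disjointness statements from the definitions: (a) $S_4\cap S_2=\emptyset$, because $S_4$ sits strictly inside the curve $|\lambda-p_1|\cdots|\lambda-p_m|=|q_1\cdots q_m|$ while $S_2$ consists of points $a_k$ lying strictly outside this region; (b) $S_4\cap S_3=\emptyset$, because $S_3\subseteq S_5$ lies on the boundary of that region, whereas $S_4$ is its open interior; (c) $S_6\cap S_2=\emptyset$, since $S_6\subseteq S_5\subseteq S_1$ and $S_2\cap S_1=\emptyset$ by definition of $S_2$. Using these,
\[
(S_2\cup S_4\cup S_6)\setminus(S_2\cup S_3)=S_4\cup(S_6\setminus S_3),
\]
which is the desired identity.

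The argument is essentially an exercise in combining already-proved statements, so there is no genuine analytic obstacle; the only thing to be careful about is the bookkeeping of the set differences, in particular confirming that $S_3$ is indeed a subset of $S_5$ (follows immediately from the equality condition in the definition of $S_3$) so that removing $S_3$ touches only the $S_6$ piece and not $S_4$.
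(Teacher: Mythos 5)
Your proposal is correct and follows exactly the paper's route: the identity $\sigma_r(\Delta_{a,b},\ell_p)=\sigma_p(\Delta_{a,b}^{*},\ell_p^{*})\setminus\sigma_p(\Delta_{a,b},\ell_p)$ via Lemma \ref{denserange}, combined with Theorems \ref{point_p} and \ref{point_p*}. The only difference is that you spell out the set-theoretic simplification and the disjointness checks that the paper leaves implicit, and those checks are all accurate.
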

\begin{proof}
From Lemma \ref{denserange} it can be easily proved that, $\sigma_r(\Delta_{a,b}, \ell_p) = \sigma_p(\Delta_{a,b}^*, \ell_p^*) \setminus \sigma_p(\Delta_{a,b}, \ell_p).$ Hence the result follows from Theorem \ref{point_p} and Theorem \ref{point_p*}.
\end{proof}

\begin{theorem}\label{cont_p}
$\sigma_c(\Delta_{a,b}, \ell_p) = S_5\setminus S_6$.
\end{theorem}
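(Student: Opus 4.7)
The plan is to exploit the general fact that for a bounded linear operator on a Banach space the spectrum decomposes as a disjoint union
$$\sigma(\Delta_{a,b},\ell_p)=\sigma_p(\Delta_{a,b},\ell_p)\,\dot{\cup}\,\sigma_c(\Delta_{a,b},\ell_p)\,\dot{\cup}\,\sigma_r(\Delta_{a,b},\ell_p),$$
so that $\sigma_c$ may be read off as a set-theoretic complement. Since Theorems \ref{sepct_p}, \ref{point_p}, and \ref{res_p} already identify the other three pieces, no new operator-theoretic work should be needed — the computation is essentially bookkeeping among the sets $S_1,\ldots,S_6$.

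The first step is to split $S_1=S_4\cup S_5$, corresponding to strict inequality versus equality in the defining relation of $S_1$, so that $\sigma(\Delta_{a,b},\ell_p)=S_2\cup S_4\cup S_5$. Unioning the results of Theorems \ref{point_p} and \ref{res_p} gives
$$\sigma_p(\Delta_{a,b},\ell_p)\cup\sigma_r(\Delta_{a,b},\ell_p)=(S_2\cup S_3)\cup\bigl(S_4\cup(S_6\setminus S_3)\bigr)=S_2\cup S_3\cup S_4\cup S_6.$$
Noting that $S_5\cap S_4=\emptyset$ (strict inequality versus equality) and that $S_5\cap S_2=\emptyset$ (since $S_2\subseteq\mathbb{C}\setminus S_1\subseteq\mathbb{C}\setminus S_5$), removing the first set from the second collapses to
$$\sigma_c(\Delta_{a,b},\ell_p)=S_5\setminus(S_3\cup S_6).$$
Thus everything reduces to the inclusion $S_3\subseteq S_6$, after which $S_3\cup S_6=S_6$ and the claim follows.

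The remaining inclusion $S_3\subseteq S_6$ is the only content-bearing step, and it is actually immediate. By the definition of $S_3$, any $\lambda\in S_3$ is of the form $\lambda=a_j$ for some $j\in\mathbb{N}$ and satisfies $|\lambda-p_1|\cdots|\lambda-p_m|=|q_1\cdots q_m|$, so the first defining condition for $S_6$ holds. For the second condition, introduce
$$f_k=\frac{(\lambda-a_1)(\lambda-a_2)\cdots(\lambda-a_{k-1})}{b_1b_2\cdots b_{k-1}}$$
as in the proof of Theorem \ref{point_p*}. For every $k\geq j+1$ the factor $(\lambda-a_j)=0$ appears in the numerator, so $f_k=0$ for all such $k$, and $\sum_{k=2}^{\infty}|f_k|^q$ reduces to a finite sum and is therefore convergent. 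Hence $\lambda\in S_6$, which is what we wanted. Combining this with the set-theoretic identities above yields $\sigma_c(\Delta_{a,b},\ell_p)=S_5\setminus S_6$.

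The only potential pitfall is being careful about disjointness: in principle the theorems for $\sigma_p$ and $\sigma_r$ must be disjoint (since an operator cannot be simultaneously injective and non-injective), and one should verify consistency. Here $\sigma_p\cap\sigma_r=(S_2\cup S_3)\cap(S_4\cup(S_6\setminus S_3))$; the term $S_3\cap(S_6\setminus S_3)=\emptyset$ is automatic, $S_2\cap S_4=\emptyset$ and $S_3\cap S_4=\emptyset$ (as $S_3\subseteq S_5$), and $(S_2\cup S_3)\cap S_6\subseteq\sigma_p$ is absorbed on the $S_3$ side via the subtraction in the definition of $S_6\setminus S_3$. So the decomposition is consistent and the above calculation is legitimate.
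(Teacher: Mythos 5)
Your proposal is correct and follows the same route as the paper: the paper's proof is exactly the one-line observation that $\sigma_c=\sigma\setminus(\sigma_p\cup\sigma_r)$ combined with Theorems \ref{sepct_p}, \ref{point_p} and \ref{res_p}. You have merely made explicit the bookkeeping the paper leaves implicit, in particular the inclusion $S_3\subseteq S_6$ (via the vanishing of the factor $\lambda-a_j$ in $f_k$ for $k\geq j+1$), which is a worthwhile detail but not a different method.
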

\begin{proof}
Since $\sigma_p(\Delta_{a,b}, \ell_p) \cup \sigma_r(\Delta_{a,b}, \ell_p) \cup \sigma_c(\Delta_{a,b}, \ell_p) = \sigma(\Delta_{a,b}, \ell_p)$, and all are mutually disjoint, so the result follows from Theorem \ref{sepct_p}, Theorem \ref{point_p} and Theorem \ref{res_p}.
\end{proof}

\begin{theorem}\label{twoband}
Let $\{a_k\}$ and $\{b_k\}$ are two sequences of real numbers with $b_k\neq0$ for all $k\in\mathbb{N}$ and
\bee
&& a_{2k-1} \rightarrow p_1, \ a_{2k} \rightarrow p_2 \\
 && b_{2k-1} \rightarrow q_1, \ b_{2k} \rightarrow q_2
\eee
as $k \rightarrow \infty$, where $q_i\neq0$ for $i=1,2$. If there exists a natural number $N$ such that
\bc
$|q_1q_2| - |b_kb_{k+1}| \geq 2R(|p_i - a_k| + |p_j - a_{k+1}|)$\\
\ec
holds for all $k\geq N$, where $i,j\in \{1,2\}$ and $i\neq j$ then for some $N_0\geq N$
\[\frac{|z - a_k| |z - a_{k+1}|}{|b_k b_{k+1}|}\geq 1\ \mbox{ for all } k\geq N_0\]
and for all $z$ in the set $\{z \in \mathbb{C} : |z - p_1||z- p_2| = |q_1q_2|\}$ with $z\neq a_k$ for all $k\in\mathbb{N}$.
Here $R \geq \sup_k|a_k| + \sup_k|b_k|$ be any real number.
\end{theorem}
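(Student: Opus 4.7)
The plan is to estimate $|(z-a_k)(z-a_{k+1})|$ from below by exploiting that $(z-p_1)(z-p_2)$ has modulus exactly $|q_1q_2|$ on the curve under consideration, and that the pair $\{a_k,a_{k+1}\}$ converges along parity classes to $\{p_1,p_2\}$. The first thing I would establish is a uniform bound $|z|\leq R$ on this curve. The curve $\{z:|z-p_1||z-p_2|=|q_1q_2|\}$ sits inside $S_1\subseteq \sigma(\Delta_{a,b},\ell_p)$ (by Theorem~\ref{sepct_p} with $m=2$), and the spectrum is contained in the closed disk of radius $\|\Delta_{a,b}\|\leq \sup_k(|a_k|+|b_k|)\leq R$, so this inclusion yields the bound for free.

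Next, for each $k\geq N$ I would match indices to parity: take $(i,j)=(1,2)$ when $k$ is odd and $(i,j)=(2,1)$ when $k$ is even, so that $a_k$ is close to $p_i$ and $a_{k+1}$ is close to $p_j$. In either case $(z-p_i)(z-p_j)=(z-p_1)(z-p_2)$, so on the curve its modulus is $|q_1q_2|$. A direct expansion then gives
\[
(z-a_k)(z-a_{k+1})-(z-p_i)(z-p_j) = -\bigl[(a_k-p_i)+(a_{k+1}-p_j)\bigr]z+\bigl(a_ka_{k+1}-p_ip_j\bigr),
\]
and rewriting $a_ka_{k+1}-p_ip_j = a_k(a_{k+1}-p_j)+p_j(a_k-p_i)$ and using $|z|,|a_k|,|p_j|\leq R$ leads to
\[
\bigl|(z-a_k)(z-a_{k+1})-(z-p_i)(z-p_j)\bigr|\leq 2R\bigl(|a_k-p_i|+|a_{k+1}-p_j|\bigr).
\]

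Finally, the reverse triangle inequality combined with the standing hypothesis produces, for every $k\geq N_0:=N$ and every $z$ on the curve with $z\neq a_\ell$ for all $\ell$,
\[
|z-a_k|\,|z-a_{k+1}|\;\geq\;|q_1q_2|-2R\bigl(|a_k-p_i|+|a_{k+1}-p_j|\bigr)\;\geq\;|b_kb_{k+1}|,
\]
which is the asserted inequality after dividing by $|b_kb_{k+1}|>0$. I do not anticipate a genuine obstacle here: the only subtle point is the a priori bound $|z|\leq R$ on the Cassini-type curve, and once that is read off from the already-proven spectral inclusion the remainder is a short algebraic expansion coupled with the triangle inequality. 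The rest is just bookkeeping to ensure the correct parity pairing of $(p_i,p_j)$ with $(a_k,a_{k+1})$.
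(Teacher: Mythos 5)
Your proof is correct, and it follows the same overall strategy as the paper: compare $(z-a_k)(z-a_{k+1})$ with the limiting product $(z-p_i)(z-p_j)$, whose modulus is exactly $|q_1q_2|$ on the curve, and absorb the perturbation into the hypothesis via the bound $|z|\leq R$ coming from $R\geq\|\Delta_{a,b}\|$ and the spectral inclusion. The one genuine difference is in how the comparison is executed. The paper bounds each factor separately, $|z-a_k|\geq|z-p_i|-|p_i-a_k|$ and $|z-a_{k+1}|\geq|z-p_j|-|p_j-a_{k+1}|$, and then multiplies these two inequalities; this step is only legitimate when both right-hand sides are nonnegative, which forces the paper to introduce $\varepsilon<\min_i\mathrm{dist}(p_i,\partial S_1)$ and pass to a possibly larger $N_0\geq N$ where $|p_i-a_k|<\varepsilon$. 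After expanding, the paper drops the nonnegative cross term and bounds $|z-p_i|,|z-p_j|\leq 2R$. You instead subtract the two quadratics, identify the difference as $-[(a_k-p_i)+(a_{k+1}-p_j)]z+a_k(a_{k+1}-p_j)+p_j(a_k-p_i)$, and apply the reverse triangle inequality to the \emph{products}; this sidesteps the positivity issue entirely, needs no $\varepsilon$-argument, and in fact delivers the conclusion with $N_0=N$, which is marginally stronger than what the paper proves. Both routes land on the same estimate $|z-a_k||z-a_{k+1}|\geq|q_1q_2|-2R(|p_i-a_k|+|p_j-a_{k+1}|)\geq|b_kb_{k+1}|$, and your parity pairing of $(i,j)$ with $(a_k,a_{k+1})$ matches the intended reading of the hypothesis.
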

\begin{proof}
Let $z\in\big\{z \in \mathbb{C} : |z - p_1||z- p_2| = |q_1q_2|\big\}$. If we choose $\varepsilon>0$ in such a way that $\varepsilon<\min\limits_i \mbox{dist}  \{p_i, \partial S_1\}$ where $\partial S_1=|z-p_1| \ |z-p_2| = |q_1 q_2|$, then for $\varepsilon>0$, there exists $N_0(\geq N)\in\mathbb{N}$ such that for $k\geq N_0$,
\[ |p_i-a_k|<\varepsilon \mbox{ and } |p_j-a_{k+1}|<\varepsilon. \]
Since by our assumption $\varepsilon<\min\limits_i \mbox{dist}  \{p_i, \partial S_1\}$ so
\[\varepsilon<|z-p_i| \mbox{ as well as } \varepsilon<|z-p_j|, \]
where $i,j\in \{1,2\}$ and $i\neq j$.
Hence for all $k\geq N_0$
\bee
|z-a_k| \ |z-a_{k+1}| &\geq& \big(|z-p_i| - |p_i-a_k|\big) \ \big(|z-p_j| - |p_j-a_{k+1}|\big)\\
&=& |z-p_i| \ |z-p_j| - |z-p_i| \ |p_j-a_{k+1}| - |z-p_j| \ |p_i-a_k|\\
&& + |p_i-a_k| \ |p_j-a_{k+1}| \\
&\geq& |q_1q_2| - |z-p_i| \ |p_j-a_{k+1}| -|z-p_j| \ |p_i-a_k|\\
&\geq&  |q_1q_2| - 2R \big( |p_j-a_{k+1}| + |p_i-a_k| \big)\\
&\geq& |b_k \ b_{k+1}|
\eee
Since $R\geq \sup\limits_k |a_k| + \sup\limits_k |b_k|$. So $2R \geq |z-p_i|$ as well as $2R \geq |z-p_j|$ because $R\geq \|\Delta_{a,b}\|_p$.\\
Hence for $k\geq N_0$, $\frac{|z-a_k| \ |z-a_{k+1}|}{|b_k \ b_{k+1}|} \geq 1$. This proves the result.
\end{proof}

\begin{corollary}\label{cor3.1}
If the condition of the theorem \ref{twoband} holds and $z\neq a_k$ for all $k\in\mathbb{N}$, then for all $z$ in $\big\{z\in\mathbb{C} :|z - p_1||z- p_2| = |q_1q_2| \big\}$, the series
\[\sum\limits_{k=2} ^{\infty} \left|\frac{(z-a_1)(z-a_2)\cdots (z-a_{k-1})}{b_1 b_2 \cdots b_{k-1}}\right|^q\]
is divergent for every $q>1$.
\end{corollary}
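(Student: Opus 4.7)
The plan is to show that the general term of the series fails to tend to zero, so that the series diverges by the divergence (nth-term) test. This reduces the entire question to an application of Theorem \ref{twoband}.

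First I would denote the $k$-th summand by
\[
T_k \;=\; \left|\frac{(z-a_1)(z-a_2)\cdots(z-a_{k-1})}{b_1 b_2 \cdots b_{k-1}}\right|^q .
\]
Because $b_k \neq 0$ for every $k$ and, by hypothesis, $z \neq a_k$ for every $k$, each $T_k$ is strictly positive. In particular any lower bound of the form $T_k \geq c > 0$ that I can establish on an infinite subset of indices will immediately imply divergence.

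The key observation is the telescoping two-step ratio
\[
\frac{T_{k+2}}{T_k} \;=\; \left|\frac{(z-a_k)(z-a_{k+1})}{b_k b_{k+1}}\right|^q ,
\]
which is tailor-made for the conclusion of Theorem \ref{twoband}. Indeed, Theorem \ref{twoband} produces an $N_0 \in \mathbb{N}$ such that, for every $z$ on the curve $\{|z-p_1||z-p_2| = |q_1 q_2|\}$ avoiding the $a_k$'s, and for every $k \geq N_0$, one has $\frac{|z-a_k||z-a_{k+1}|}{|b_k b_{k+1}|} \geq 1$. Therefore $T_{k+2} \geq T_k$ for all $k \geq N_0$. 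Splitting into residues mod $2$, the two subsequences $(T_{N_0 + 2j})_{j \geq 0}$ and $(T_{N_0 + 1 + 2j})_{j \geq 0}$ are non-decreasing sequences of positive reals, hence bounded below by the positive constants $T_{N_0}$ and $T_{N_0 + 1}$ respectively. Consequently $T_k \not\to 0$ as $k \to \infty$, and $\sum_{k \geq 2} T_k$ diverges.

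The main obstacle is essentially absorbed by Theorem \ref{twoband}; the only delicate point in the argument is the bookkeeping that ensures the estimate $\frac{|z-a_k||z-a_{k+1}|}{|b_k b_{k+1}|} \geq 1$ really controls the correct two-step ratio $T_{k+2}/T_k$, and that the two parity subsequences $(T_{N_0 + 2j})_j$, $(T_{N_0 + 1 + 2j})_j$ together cover all sufficiently large $k$, so that the divergence test applies to the whole tail of the series.
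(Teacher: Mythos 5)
Your proposal is correct and rests on the same key point as the paper's proof: Theorem \ref{twoband} gives $|z-a_k||z-a_{k+1}|/|b_k b_{k+1}|\geq 1$ for $k\geq N_0$, hence the two-step ratio $T_{k+2}/T_k\geq 1$, so the (positive) terms stay bounded away from zero. The paper packages this as a comparison with a divergent sub-series obtained by grouping pairs of factors, while you invoke the $n$th-term test on the two parity subsequences; the two formulations are equivalent.
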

\begin{proof}
Suppose there exists $N_0\in\mathbb{N}$ such that for $k\geq N_0$, $\frac{|z-a_k| \ |z-a_{k+1}|}{|b_k \ b_{k+1}|} \geq 1$. Now if we consider the series
\bee
\left| \frac{(z-a_1)(z-a_2)\cdots (z-a_{N_0 - 1})}{b_1 b_2 \cdots b_{N_0-1}}\right|^q \Bigg( 1 + \left| \frac{(z-a_{N_0})(z-a_{N_0+1})}{b_{N_0} b_{N_0+1}}\right|^q \\
+\left| \frac{(z-a_{N_0})(z-a_{N_0+1})(z-a_{N_0+2})(z-a_{N_0+3})}{b_{N_0} b_{N_0+1} b_{N_0+2} b_{N_0+3}}\right|^q +\cdots\Bigg)\\
\mbox{[Grouping multiple of 2 terms]}
\eee
The above series is divergent. Hence by comparison test, the series $\sum\limits_{k=2} ^{\infty} \left|\frac{(z-a_1)(z-a_2)\cdots (z-a_{k-1})}{b_1 b_2 \cdots b_{k-1}}\right|^q$ is divergent.
\end{proof}

\begin{theorem}\label{thm3.1}
If the conditions mentioned in the  Theorem \ref{twoband} hold in case of the operator $\Delta_{a,b}$ with $m=2$, then the followings hold good :
\begin{enumerate}[(i)]
\item $\sigma (\Delta_{a,b}, \ell_p) = S_1 \cup S_2$
\item $\sigma_p(\Delta_{a,b}, \ell_p) = S_2 \cup S_3$
\item $\sigma_r(\Delta_{a,b}, \ell_p) = S_4 \cup (S_6\setminus S_3)$
\item $\sigma_c(\Delta_{a,b}, \ell_p) = \big\{ \lambda :\lambda \neq a_k, |\lambda - p_1| \ |\lambda - p_2| = |q_1 \ q_2| \big\}$
\end{enumerate}
\end{theorem}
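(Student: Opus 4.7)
The first three statements are essentially restatements of results already proved. Specifically, part (i) is Theorem \ref{sepct_p}, part (ii) is Theorem \ref{point_p}, and part (iii) is Theorem \ref{res_p}, all specialized to $m=2$; none of these actually requires the extra hypothesis coming from Theorem \ref{twoband}. So for these three items I would simply cite the corresponding theorems and note that the sets $S_1,\ldots,S_6$ reduce to the stated form when $m=2$.

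The real content lies in (iv). The plan is to start from Theorem \ref{cont_p}, which gives $\sigma_c(\Delta_{a,b},\ell_p)=S_5\setminus S_6$, and then show that under the hypotheses of Theorem \ref{twoband}
\[
S_5\setminus S_6 \;=\; \bigl\{\lambda\in\mathbb{C}:\lambda\neq a_k\ \text{for all}\ k,\ |\lambda-p_1|\,|\lambda-p_2|=|q_1q_2|\bigr\}.
\]
The argument splits naturally into two implications, according to whether $\lambda$ avoids the diagonal entries $\{a_k\}$ or not.

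For the inclusion ``$\supseteq$'', take $\lambda\in S_5$ with $\lambda\neq a_k$ for every $k\in\mathbb{N}$. Corollary \ref{cor3.1} applied for $m=2$ tells us that the series
\[
\sum_{k=2}^{\infty}\left|\frac{(\lambda-a_1)(\lambda-a_2)\cdots(\lambda-a_{k-1})}{b_1b_2\cdots b_{k-1}}\right|^{q}
\]
diverges, which is precisely the negation of the summability condition in the definition of $S_6$. Hence $\lambda\notin S_6$, so $\lambda\in S_5\setminus S_6$.

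For the reverse inclusion, let $\lambda\in S_5\setminus S_6$ and suppose for contradiction that $\lambda=a_j$ for some $j\in\mathbb{N}$. Then the factor $(\lambda-a_j)$ appears in the numerator of the general term of the series defining $S_6$ for every index $k\ge j+1$, forcing every such term to vanish. The series therefore reduces to a finite sum, which is trivially convergent, so $\lambda\in S_6$, contradicting $\lambda\notin S_6$. Hence $\lambda\neq a_k$ for every $k$, and the two inclusions together give (iv).

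I do not anticipate any genuine obstacle: Corollary \ref{cor3.1} is the hard analytic input, and once it is available the equivalence is a short bookkeeping argument. The only small subtlety to be careful about is handling $\lambda=a_j$ correctly, making sure that ``$|f_k|^q$ is eventually zero'' is used rather than any more delicate estimate, so that the conclusion $\lambda\in S_6$ really does hold whenever $\lambda$ lies on the boundary curve $S_5$ and coincides with some $a_j$.
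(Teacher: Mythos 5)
Your proposal is correct and follows essentially the same route as the paper: the paper's proof simply invokes Corollary \ref{cor3.1} to conclude that under the hypotheses of Theorem \ref{twoband} one has $S_6=\{a_k : |a_k-p_1|\,|a_k-p_2|=|q_1q_2|\}$, and then cites Theorems \ref{sepct_p}, \ref{point_p}, \ref{res_p} and \ref{cont_p} for the four items. Your write-up merely makes explicit the two inclusions behind that identification of $S_6$ (divergence via Corollary \ref{cor3.1} when $\lambda\neq a_k$ for all $k$, and termination of the series when $\lambda=a_j$), which is exactly the content the paper leaves implicit.
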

\begin{proof}
By Corollary \ref{cor3.1}, we have $S_6 = \{a_k : |a_k - p_1| \ |a_k - p_2| = |q_1 \ q_2|\}$. Therefore the results $(i), (ii), (iii)$ and $(iv)$ follow from Theorem \ref{sepct_p}, Theorem \ref{point_p}, Theorem \ref{res_p} and Theorem \ref{cont_p} respectively.
\end{proof}

\begin{theorem}\label{goldberg}
The operator $\Delta_{a,b}$ satisfies the following relations,
\begin{enumerate}[(i)]
\item[(a)]  $A_3 \sigma(\Delta_{a,b}, \ell_p)=B_3 \sigma(\Delta_{a,b}, \ell_p)=  \emptyset ,$
\item[(b)] $C_3 \sigma(\Delta_{a,b}, \ell_p) = \sigma_p(\Delta_{a,b}, \ell_p)$
\item[(c)] $B_2 \sigma(\Delta_{a,b}, \ell_p) = \sigma_c(\Delta_{a,b}, \ell_p),$
\item[(d)] $C_1 \sigma(\Delta_{a,b}, \ell_p) \cup C_2 \sigma(\Delta_{a,b}, \ell_p) = \sigma_r(\Delta_{a,b}, \ell_p)$
\end{enumerate}
\end{theorem}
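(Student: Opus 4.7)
The plan is to notice that items (c) and (d) are nothing more than a rewriting of Remark \ref{R1.7.7}, which already declares $\sigma_c(A,X) = B_2\sigma(A,X)$ and $\sigma_r(A,X) = C_1\sigma(A,X) \cup C_2\sigma(A,X)$. So those two parts require no real work beyond quoting the remark. The substance of the theorem is item (a), and item (b) follows from it because Remark \ref{R1.7.7} also gives the disjoint partition $\sigma_p = A_3\sigma \cup B_3\sigma \cup C_3\sigma$: once $A_3\sigma = B_3\sigma = \emptyset$ is established, the remaining equality $C_3\sigma = \sigma_p$ is automatic.

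To prove (a), I would invoke Lemma \ref{denserange}: the range $R(\Delta_{a,b}-\lambda I)$ is dense in $\ell_p$ if and only if the adjoint $\Delta_{a,b}^{*}-\lambda I$ is injective, i.e.\ if and only if $\lambda \notin \sigma_p(\Delta_{a,b}^{*}, \ell_p^{*})$. Since states $A_3$ and $B_3$ both require the range to be dense, ruling them out is equivalent to showing
\[
\sigma_p(\Delta_{a,b}, \ell_p) \subseteq \sigma_p(\Delta_{a,b}^{*}, \ell_p^{*}).
\]
By Theorem \ref{point_p} and Theorem \ref{point_p*}, this reduces to the set-theoretic inclusion $S_2 \cup S_3 \subseteq S_2 \cup S_4 \cup S_6$, and hence to $S_3 \subseteq S_2 \cup S_4 \cup S_6$.

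The key observation is that every element of $S_3$ is some $a_j$ satisfying the boundary equation $|a_j - p_1|\cdots|a_j - p_m| = |q_1\cdots q_m|$, so it lies on $S_5$ and cannot lie in $S_2$ (strict $>$) or $S_4$ (strict $<$); thus only $S_3 \subseteq S_6$ has to be verified. For $\lambda = a_j$, I would inspect the series
\[
\sum_{k=2}^{\infty} \left|\frac{(\lambda - a_1)(\lambda - a_2)\cdots(\lambda - a_{k-1})}{b_1 b_2 \cdots b_{k-1}}\right|^q
\]
appearing in the definition of $S_6$, and note that once $k-1 \geq j$ the factor $(a_j - a_j) = 0$ occurs in the numerator, so every term from $k = j+1$ onward vanishes. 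The series therefore collapses to a finite sum and converges trivially, placing $a_j$ in $S_6$.

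I do not anticipate any real obstacle here; the argument is essentially a bookkeeping check about which boundary values of $S_1$ lie in which auxiliary set, and the vanishing factor $(a_j - a_j)$ makes $S_3 \subseteq S_6$ almost immediate. After assembling these pieces, (a) follows, (b) comes from the partition $\sigma_p = A_3\sigma \cup B_3\sigma \cup C_3\sigma$, and (c), (d) are restatements of Remark \ref{R1.7.7}.
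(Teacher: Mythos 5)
Your proof is correct and follows essentially the same route as the paper: Remark \ref{R1.7.7} disposes of (b)--(d) once (a) is known, and (a) is obtained from Lemma \ref{denserange} via the inclusion $\sigma_p(\Delta_{a,b},\ell_p)\subseteq\sigma_p(\Delta_{a,b}^*,\ell_p^*)$, which forces every eigenvalue into state $C_3$. The only difference is that the paper merely asserts this inclusion, whereas you actually verify it by reducing to $S_3\subseteq S_6$ and observing that the vanishing factor $(a_j-a_j)$ truncates the defining series of $S_6$ to a finite sum --- a detail the paper omits.
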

\begin{proof}
From Remark \ref{R1.7.7} we have the following relations
\begin{eqnarray*}
\sigma_p(\Delta_{a,b}, \ell_p) &=& A_3 \sigma(\Delta_{a,b}, \ell_p) \cup B_3 \sigma(\Delta_{a,b}, \ell_p) \cup C_3 \sigma(\Delta_{a,b}, \ell_p),\\
\sigma_r(\Delta_{a,b}, \ell_p) &=& C_1 \sigma(\Delta_{a,b}, \ell_p) \cup C_2 \sigma(\Delta_{a,b}, \ell_p),\\
\sigma_c(\Delta_{a,b}, \ell_p) &=& B_2 \sigma(\Delta_{a,b}, \ell_p).
\end{eqnarray*}
Since $\sigma_p(\Delta_{a,b}, \ell_p) \subset \sigma_p(\Delta_{a,b}^*, \ell_p^*),$ then by Lemma \ref{denserange} it follows that $\lambda \in \sigma_p(\Delta_{a,b}, l_p)$ if and only if $(\Delta_{a,b} - \lambda I)^{-1}$ does not exist and $\overline{R(\Delta_{a,b} - \lambda I)} \neq \ell_p$. This proves that $C_3 \sigma(\Delta_{a,b}, \ell_p) = \sigma_p(\Delta_{a,b}, l_p)$ and $A_3 \sigma(\Delta_{a,b}, \ell_p)=B_3 \sigma(\Delta_{a,b}, \ell_p)=  \emptyset$. The other two results follow from above relations.
\end{proof}

\begin{example}
Let $\Delta_{a,b}$ be an operator of the form \eqref{main} where
\begin{equation*}
a_k = \left\{
\begin{aligned}
&1 - \frac{1}{k^2}, \ k  = 1,3,5, \cdots &\\
&\frac{1}{2} - \frac{1}{k^2}, \ k  = 2,4,6, \cdots &
\end{aligned}
\right.  \ \mbox{and} \ \ \
b_k = \left\{
\begin{aligned}
&2 - \frac{1}{k}, \ k  = 1,3,5, \cdots &\\
&3 - \frac{1}{k}, \ k  = 2,4,6, \cdots &
\end{aligned}
\right.
\end{equation*}
Then $p_1 = 1, p_2 = \frac{1}{2}$ and $q_1 = 2, q_2 = 3$ and $R = \sup_k |a_k| + \sup_k|b_k| = 4.$ Also for odd $k,$
\[|q_1q_2| - |b_k b_{k+1}| = \frac{5k+2}{k(k+1)}. \]
Again
\[2R (|a_k - p_1| + |a_{k+1} - p_2|) =  \frac{16k^2 + 16k +8}{k^2(k+1)^2}. \]
Now
\begin{eqnarray*}
|q_1q_2| - |b_k b_{k+1}| &-&  2R (|a_k - p_1| + |a_{k+1} - p_2|) \\
&=& \frac{5k^3 - 9k^2-14k - 8}{k^2(k+1)^2} >0 \ \ \forall \ \ k \geq 4.
\end{eqnarray*}
Since for odd $k (\geq 5)$, $|q_1q_2| - |b_k b_{k+1}| > 2R (|a_k - p_1| + |a_{k+1} - p_2|)$, so if we choose $\varepsilon$ in a way such that $\varepsilon < dist (p_i, \partial S_1)$, where $\partial S_1= \big\{ z : |z-p_1| \ |z-p_2| = q_1 \ q_2 \big\}$ and $i\in\{1,2\}$, then by the technique as used in Theorem \ref{twoband}, there exists  $N_0\in\mathbb{N}$ such that for odd $k$ and $k\geq \max\{5, N_0\}$, we have  $\frac{|z-a_k| \ |z-a_{k+1}|}{|b_kb_{k+1}|} \geq 1$.\\
Now if we choose a odd number $M_0\geq\max\{5, N_0\}$, then for $q>1$, the series
\bee
\left| \frac{(z-a_1)(z-a_2)\cdots (z-a_{M_0 - 1})}{b_1 b_2 \cdots b_{M_0-1}}\right|^q \Bigg( 1 + \left| \frac{(z-a_{M_0})(z-a_{M_0+1})}{b_{M_0} b_{M_0+1}}\right|^q \\
+ \left| \frac{(z-a_{M_0})(z-a_{M_0+1})(z-a_{M_0+2})(z-a_{M_0+3})}{b_{M_0} b_{M_0+1} b_{M_0+2} b_{M_0+3}}\right|^q +\cdots\Bigg)
\eee
is divergent and hence the series $\sum\limits_{k=2} ^{\infty} \left|\frac{(z-a_1)(z-a_2)\cdots (z-a_{k-1})}{b_1 b_2 \cdots b_{k-1}}\right|^q$ is divergent.\\
Now the following results follow from theorem \ref{thm3.1},
\begin{enumerate}[(i)]
\item $\sigma(\Delta_{a,b}, l_p)=S_1 \cup S_2$
\item $\sigma_p(\Delta_{a,b}, l_p) = S_2 \cup S_3$
\item $\sigma_r(\Delta_{a,b}, l_p) = S_4 \cup (S_6\setminus S_3) $
\item $\sigma_c(\Delta_{a,b}, l_p) = \big\{ \lambda :\lambda \neq a_k, |\lambda - p_1| |\lambda - p_2| = |q_1 q_2| \big\}$.
\end{enumerate}
\end{example}\vspace{.2cm}

\end{document}